\newcommand{\qtext}[1]{\quad\text{#1}\quad}
\newcommand{\qand}{\qtext{and}}
\newtheorem{thm}{Theorem}
\newtheorem{prop}[thm]{Proposition}
\newtheorem{lem}[thm]{Lemma}
\newtheorem{cor}[thm]{Corollary}
\theoremstyle{definition}
\newtheorem{defn}[thm]{Definition}
\newtheorem{exa}[thm]{Example}
\newtheorem{rem}[thm]{Remark}
\theoremstyle{remark}
\newtheorem*{ack}{Acknowledgements}
\numberwithin{thm}{section}
\DeclareMathOperator*{\sotlim}{{\mbox{\scshape sot}}-lim}
\newcommand{\sotto}{\stackrel{\textsc{sot}}{\to}}
\DeclareMathOperator*{\wotlim}{{\mbox{\scshape wot}}-lim}
\newcommand{\wotto}{\stackrel{\textsc{wot}}{\to}}
\newcommand{\Xalg}[3]{#1(#2,#3)}
\newcommand{\Lalg}[2][G]{\Xalg{\mathfrak{L}}{#1}{#2}}
\newcommand{\Ralg}[2][G]{\Xalg{\mathfrak{R}}{#1}{#2}}
\def\dc#1{\expandafter\def\csname#1\endcsname{\mathcal{#1}}}
\def\db#1{\expandafter\def\csname b#1\endcsname{\mathbb{#1}}}
\def\loopy#1#2{\def#1##1{\def\next{#2{##1}#1}\ifx##1\relax\let\next\relax\fi\next}}
\loopy{\makemathcals}{\dc}\loopy{\makemathbbs}{\db}
\begin{document}

\title[Commutants of Weighted Shift Directed Graph Algebras]{Commutants of Weighted Shift Directed Graph Operator Algebras}

\author[D.~W.\ Kribs]{David W. Kribs}
\address{Department of Mathematics \& Statistics, University of Guelph, Guelph, ON, Canada N1G 2W1}
\author[R.~H.\ Levene]{Rupert H. Levene}
\address{School of Mathematics \& Statistics, University College Dublin, Belfield, Dublin 4, Ireland}
\author[S.~C.\ Power]{Stephen C. Power}
\address{Department of Mathematics \& Statistics, Lancaster University, Lancaster, U.K., LA1 4YF}
\subjclass[2010]{47L75, 47L55, 47B37}

\keywords{Directed graph, weighted shift, non-selfadjoint algebra, commutant, left regular representation, creation operators, Fock space.}

\begin{abstract}
We consider non-selfadjoint operator algebras $\Lalg\lambda$ generated by weighted creation operators on the Fock-Hilbert spaces of countable directed graphs~$G$. These algebras may be viewed as noncommutative generalizations of weighted Bergman space algebras, or as weighted versions of the free semigroupoid algebras of directed graphs. A complete description of the commutant is obtained
together with broad conditions that ensure the double commutant property.  It is also shown that the double commutant property may fail for $\Lalg{\lambda}$ in the case of the single vertex graph with two edges and a suitable choice of left weight function~$\lambda$.
\end{abstract}
\maketitle

\section{Introduction}

For over two decades, operator algebras associated with directed graphs and their generalizations have received intense interest in the operator algebra and mathematics community. This class of algebras includes many interesting examples, often with connections to different areas, such as dynamical systems, and at the same time is sufficiently broad that results for them have given insights to the general theory of operator algebras. The most fundamental non-selfadjoint algebras in this class are the tensor algebras~\cite{muhlysolel,popescu} and free semigroupoid algebras of directed graphs~\cite{katsouliskribs,kribspower,kribssolel}, including free semigroup algebras~\cite{davidsonpitts,kennedy}.  Each of these is generated by creation operators on a Fock-type Hilbert space defined by the graph, and there is now an extensive body of work for these algebras. %

In this paper we consider weighted creation operator generalizations, in the weak operator topology (WOT) closed setting,  and we investigate their algebraic structure. The resulting {\it weighted shift directed graph algebras} $\Lalg{\lambda}$ may be viewed as the minimal generalization of two different classes of non-selfadjoint algebras: the free semigroupoid algebras of directed graphs on the one hand, and on the other, the classical unilateral weighted shift algebras associated with single variable weighted Bergman spaces.

The paper is organized as follows. In the next section we introduce the notation  $\lambda$, $\rho$,  for certain left and right weight functions for the path semigroupoid of a directed graph $G$, and define their associated weighted creation operators (which need not be bounded) and their respective operator algebras, $\Lalg{\lambda}$ and $\Ralg{\rho}$, on the Fock space $\H_G$. In the subsequent section we investigate the structure of the commutant algebra $\Lalg{\lambda}'$  and obtain its characterization under the natural condition (left-boundedness of $\lambda$) that all the weighted left creation operators are bounded. In the proof we identify a simple commuting square condition that relates the left weight $\lambda$ to a particular right weight $\rho$ which is relevant to the commutant, and we exploit this to show that $\Lalg{\lambda}' =\Ralg{\rho}$ for this right weight.
In the fourth section we investigate the double commutant $\Lalg{\lambda}''$ and obtain broad conditions
which ensure the double commutant property $\Lalg{\lambda}''=\Lalg{\lambda}$.

A range of illuminating examples is also given. In particular, for the single vertex graph with two edges it is shown that there exist left-bounded weights $\lambda$ for which $\Lalg{\lambda} '' = \B(\H_G)$. On the other hand, for the directed $2$-cycle graph, with two vertices and two edges,  necessary and sufficient conditions are obtained for the double commutant property.

Our focus here is on the analysis of generalized weighted shifts and the non-selfadjoint operator algebras they generate, in a setting that embraces both commutative and non-commutative versions, and is built upon the contemporary directed graph operator algebra framework. In fact the first foray in this direction for single vertex directed graphs gave sufficient conditions for the determination of the commutant and for reflexivity~\cite{kribs04},
the basic general goals being to extend results from the single variable commutative case and to expose new phenomena in the non-commutative directed graph setting. Our concern in the present paper is to characterize  commutants for the left and right algebras by identifying explicit conditions at the level of weighted graphs. 
 It would be interesting to connect this double commutant investigation with the recent work \cite{marcouxmastnak} on a general double commutant
theorem for non-selfadjoint algebras, and with recent work on weighted Hardy algebras of correspondences~\cite{dor-on,muhlysolel-weighted}.

We leave the natural problems of invariant subspace structure and reflexivity  for these algebras for investigation elsewhere. It should be possible to identify a large class of these algebras as being reflexive, and in doing so, extend results from the case of weighted Bergman spaces~\cite{Shields74} and partial results from the weighted free semigroupoid algebra case~\cite{kribs04}. Additionally, non-reflexive examples have not yet been constructed in the non-commutative case. This should also be possible with extended notions of strictly cyclic weighted shifts to our setting.

\section{Weighted Shift Directed Graph Algebras}

Let $G$ be a countable
directed graph with edge set $E(G)=\{e,f,\ldots\}$ and vertex set $V(G)=\{x,y,\ldots\}$. 
We will write $G^+=\{ u,v,w,\ldots \}$ for the set of finite paths in $G$, including the vertices regarded as paths of length~$0$. Note that if~$G$ is finite (by which we mean that both $V(G)$ and~$E(G)$ are finite), then the set $\{w\in G^+\colon |w|< k\}$ is finite for each~$k\ge1$, where~$|w|$ denotes the length of a path~$w$. We write $s(w)$ and $r(w)$ for the source and range vertices of a path~$w$; in particular, if~$x\in V(G)$, then $r(x)=x=s(x)$. We will also take a right to left orientation for path products, so that $w = r(w) w s(w)$ for all $w\in G^+$, and for $v,w\in G^+$ we have $wv\in G^+$ if and only if $s(w)=r(v)$.

To each such graph $G$ we associate the Hilbert space $\H_G=\ell^2(G^+)$, called the Fock space of $G$, with canonical orthonormal basis $\{ \xi_v : v\in G^+ \}$. The vectors $\xi_x$ for $x\in V(G)$ are called vacuum vectors. The left creation operators on $\H_G$ are the partial isometries defined as follows: $L_w \xi_v = \xi_{wv}$ whenever $wv\in G^+$, and $L_w \xi_v = 0$ otherwise. (These operators may also be viewed as generated by the left regular representation of the  path semigroupoid of the graph.)  Pictorially, as an accompaniment to the directed graph, one can view the actions of the generators $L_e$ as tracing out downward tree structures that lay out the basis vectors for $\H_G$. One tree is present for each vertex $x$ in $G$, with the top tree vertex in each tree corresponding to a vacuum vector $\xi_x$, and the tree edges corresponding to the basis pairs $(\xi_w, \xi_{ew})$.
 
We call a function $\lambda\colon G^+\times G^+\to [0,\infty)$ a
\emph{left weight} on~$G$ if
\begin{enumerate}
\item $\lambda(v,w)>0\iff wv\in G^+$; and
\item  $\lambda$ satisfies the \emph{(left) cocycle condition}:
\[\lambda(v,w_2w_1) =  \lambda(w_1v,w_2)\lambda(v,w_1)\] for all $v,w_1,w_2\in G^+$ with $w_2w_1v\in G^+$.
\end{enumerate}
Note that if~$v\in G^+$, then $r(v)\in V(G)$ satisfies $r(v)=r(v)^2$, hence $\lambda(v,r(v))=\lambda(v,r(v))^2$ and so $\lambda(v,r(v))=1$. In particular, for $x\in V(G)$ we have $x=s(x)=r(x)$ and so $\lambda(s(x),x)=\lambda(x,r(x))=1$.
Note also that the edge weights $\lambda(v,e)$ (where $ev\in G^+$ and~$e\in E(G)$) determine the entire function $\lambda$ through the left cocycle condition. Indeed, if we attach the weight $\lambda(v,e)$ to the edge in the Fock space tree corresponding to the move $\xi_v\mapsto \xi_{ev}$ defined by $L_e\xi_v = \xi_{ev}$, then we can view $\lambda(v,w)$ (when non-zero) as the product of the individual weights one crosses when moving from $\xi_v$ to $\xi_{wv}$ in that tree. See subsequent sections for more discussion on this ``forest'' perspective.   

Given such a left weight~$\lambda$, we define (by a mild abuse of notation)
\[\lambda(w)=\sup_{v\in G^+} \lambda(v,w)\in [0,\infty]\]
for each $w\in G^+$. We say that~$\lambda$ is \emph{left-bounded at~$w$} if 
$\lambda(w) < \infty$, and that~$\lambda$ is \emph{left-bounded} if this condition holds for all $w\in G^+$. The cocycle condition gives $\lambda(w_2w_1)\leq  \lambda(w_2)\lambda(w_1)$ whenever $w_2w_1\in G^+$, so $\lambda$ is left-bounded if and only if $\lambda$ is left-bounded at every edge~$e\in E(G)$.

If $\lambda$ is left-bounded at~$w$, then we define the weighted left shift operator $L_{\lambda,w}\in \B(\H_G)$ to be the continuous linear extension of
\[
L_{\lambda,w} \, \xi_v =
\begin{cases}
\lambda(v,w) \,\xi_{wv} &\text{if $wv\in G^+$}\\
0 &\text{otherwise.}  
\end{cases}
\]
Since $\lambda(w)<\infty$, it is easy to see that this gives a well-defined operator with $\|L_{\lambda,w}\|=\lambda(w)$. Moreover, if~$\lambda$ is left-bounded, then by the cocycle condition we see that $w\mapsto L_{\lambda,w}$ is a semigroupoid homomorphism: \[L_{\lambda,w_2w_1}=L_{\lambda,w_2}L_{\lambda,w_1}\quad \text{whenever $w_2w_1\in G^+$}.\]

We remark that one could also consider complex-valued left weight functions rather than weights taking non-negative values only. However, the corresponding weighted left shift operators would be jointly unitarily equivalent to weighted shift operators defined by a non-negative weight function. To see this, consider a complex-valued left weight $\mu\colon G^+\times G^+\to \bC$, by which we mean that $\mu(v,w)\ne0\iff wv\in G^+$ and $\mu$ satisfies the left cocycle condition. Define corresponding weighted left shifts $L_{\mu,w}$ exactly as above, let $\lambda\colon G^+\times G^+\to [0,\infty)$ be the non-negative left weight $\lambda(v,w)=|\mu(v,w)|$ and consider $\beta\colon G^+\times G^+\to \bT$, $\beta(v,w)=\tfrac{\lambda(v,w)}{\mu(v,w)}$ when $wv\in G^+$, and $\beta(v,w)=1$ otherwise. Note that~$\beta$ then satisfies the left cocycle condition, so in particular we have $\beta(s(v),wv)=\beta(v,w)\beta(s(v),v)$ whenever $wv\in G^+$. The unitary operator $U_\beta$ mapping $\xi_v$ to $\beta(s(v),v)\xi_v$  satisfies
\[ U_\beta L_{\mu,w}\xi_v = \mu(v,w)\beta(s(v),wv)\xi_{wv}=\beta(s(v),v)\lambda(v,w)\xi_{wv} = L_{\lambda,w}U_\beta\xi_v
\]
whenever $wv\in G^+$, hence $U_\beta L_{\mu,w}=L_{\lambda,w}U_\beta$, i.e., $L_{\mu,w}=U_\beta^*L_{\lambda,w}U_\beta$.

Observe also that the requirement that $\lambda(v,w) \ne 0$ when $wv\in G^+$ is equivalent to requiring $L_{\lambda,w}$ to be injective on the set $\{\xi_v : wv\in G^+\}$, and is thus an assumption we build into the weights to avoid degeneracies in the analysis. Finally note that each operator $L_{\lambda,w}$ factors as a product of $L_w$ and a diagonal (with respect to the standard basis) weight operator, just as in the single variable case of~\cite{Shields74} which is recovered when~$G$ consists of a single vertex with a single loop edge.

We now define the algebras $\Lalg{\lambda}$ that we shall consider in the paper. In the case of the single vertex, single loop edge graph, these algebras include classical unilateral weighted shift algebras such as those associated with weighted Bergman spaces; see the survey article~\cite{Shields74} for an entrance point into the literature. The case of a single vertex graph and multiple loop edges was first considered along with some reflexivity type problems in~\cite{kribs04}. 

\begin{defn}
  If ${\lambda}$ is a left weight on a directed graph $G$, then we
  write $\Lalg{\lambda}$ for the WOT-closed unital operator
  algebra generated by the family of weighted left shift
  operators $\{ L_{\lambda,w} : w \in G^+,\ \lambda(w)<\infty\}$.
\end{defn}

\begin{rem}
  If (as is often the case below)~$\lambda$ is left-bounded, then the
  set \[\{L_{\lambda,w}\colon w\in V(G)\cup E(G)\}\] also
  generates~$\Lalg{\lambda}$ as a WOT-closed unital
  operator algebra.
\end{rem}

Let us call a strictly positive 
function~$\alpha\colon G^+\to (0,\infty)$ with $\alpha(x)=1$ for all
$x\in V(G)$ a \emph{path weight} on~$G$. For any such~$\alpha$, there is a corresponding
left weight~$\lambda_\alpha$ on~$G$ given by
\[ \lambda_\alpha(v,w)=
\begin{cases}
  \frac{\alpha(wv)}{\alpha(v)}&\text{if $wv\in G^+$}\\
  0&\text{otherwise.}
\end{cases}\] Conversely, from any left weight~$\lambda$, we obtain a
corresponding path weight $\alpha_\lambda\colon v\mapsto
\lambda(s(v),v)$, and these correspondences are inverses of one
another. This observation allows us to easily construct examples of
left weights.

The left-handed notions above have right-handed counterparts which will
play an important role in describing commutants. A \emph{right weight}
on~$G$ is a function~$\rho\colon G^+\times G^+\to [0,\infty)$
satisfying $\rho(v,u)>0\iff vu\in G^+$ and the \emph{(right) cocycle
  condition}
\[ \rho(v,u_1u_2)=\rho(vu_1,u_2)\rho(v,u_1)\] for all $v,u_1,u_2\in
G^+$ with $vu_1u_2\in G^+$. We then have $\rho(v,s(v))=1$ for all $v\in
G^+$. We write $\rho(u)=\sup_v\rho(v,u)$, and say $\rho$ is
\emph{right-bounded at~$u$} if $\rho(u)<\infty$. We may then consider
the weighted right shift operator~$R_{\rho,u}\in \B(\H_G)$ (with
$\|R_{\rho,u}\|=\rho(u)$) which satisfies the defining equation
\[ R_{\rho,u}\xi_v=
  \begin{cases}
\rho(v,u) \,\xi_{vu} &\text{if $vu\in G^+$}\\
0 &\text{otherwise.}  
\end{cases}
\]
We have $\rho(u_1u_2)\leq \rho(u_2)\rho(u_1)$, and
$R_{\rho,u_1u_2}=R_{\rho,u_2}R_{\rho,u_1}$ whenever $u_1u_2\in G^+$
and $\rho$ is right-bounded at $u_1$ and at~$u_2$. 

\begin{defn}
We write $\Ralg{\rho}$
for the WOT-closed unital operator algebra generated
by 
$
\{R_{\rho,u}\colon u\in G^+,\ \rho(u)<\infty\}
$. 
\end{defn}
A right weight is \emph{right-bounded} if it is right-bounded at
every~$u\in G^+$. Finally, we observe that 
\[\rho_\alpha(v,u)=
\begin{cases}
  \frac{\alpha(vu)}{\alpha(v)}&\text{if $vu\in G^+$}\\
  0&\text{otherwise}
\end{cases}\] defines a one-to-one correspondence between path
weights~$\alpha$ and right weights~$\rho=\rho_\alpha$.

\begin{rem}
Each of these right-handed definitions may be derived by applying the
corresponding left-handed definition to the opposite graph of~$G$, and making appropriate identifications.
Note that the suprema defining $\lambda(u)$ and $\rho(u)$ are taken over the first argument, in $\lambda(\cdot,u)$ and $\rho(\cdot,u)$, and so in particular the notion of right-boundedness for a left weight function does not arise. A path weight~$\alpha$, on the other hand, may be said to be left (resp. right) bounded if the associated map~$\lambda_\alpha$ (resp.~$\rho_\alpha$) is left-bounded (resp.~right-bounded).
\end{rem}

\begin{rem} 
The weighted shift creation operators $L_{\lambda, e}$ for edges of
$e$, and also sums of these operators, are in fact special cases of a wide class of weighted shift operators defined on general countable trees, rather than our graph generated trees. The single operator theory for these general shifts, such as conditions for hyponormality and $p$-hyponormality, is developed in the recent book of Jablo´nski,  Jung, and  Stochel~\cite{jjs-12}.
\end{rem}

\begin{rem}
Muhly and Solel have recently defined weighted shift versions of the Hardy algebras~$H^\infty(E)$~\cite{muhlysolel-weighted} that can be associated with a correspondence $E$ (a self-dual right Hilbert $C^*$-module) over a $W^*$-algebra $M$. The Hardy algebras $\A=H^\infty(E)$ in fact provide  generalizations of the free semigroupoid graph algebras in which the self-adjoint (diagonal) subalgebra $\A \cap \A^*$ is no longer commutative. At the expense of a much higher level of technicality, the weighted shift versions of these Hardy algebras similarly extend the weighted shift directed graph algebras~$\Lalg{\lambda}$.
\end{rem}

\section{Commutant Structure}

Let~$\lambda$ be a left weight on~$G$, and let~$\rho$ be a right
weight on~$G$.
We say that the pair $(\lambda,\rho)$ satisfies the \emph{commuting square condition}  at $(w,u)\in G^+\times G^+$
if
  \[ \rho(wv,u)\lambda(v,w)=\lambda(vu,w)\rho(v,u) 
	\] 
for every $v\in  G^+$ with $wvu\in G^+$.	If~$\lambda$ is left-bounded at~$w$ and~$\rho$ is right-bounded
at~$u$, then a simple computation shows that this condition holds if and only if  \[R_{\rho,u}L_{\lambda,w}=L_{\lambda,w}R_{\rho,u}.\]

Recall that associated with the left weight $\lambda$
is a forest graph whose vertices are labelled by the elements of $G^+$ and whose edges $(v,ev)$ are labelled by the individual weights $\lambda(v,e)$ for $e\in E(G)$.
We may now  augment this  ${\lambda}$-labelled forest  by additional ${\rho}$-edges $(v,ve)$, which are labelled by the individual nonzero weights ${\rho}(v,e)$. The resulting labelled graph is the union of two labelled edge-disjoint forests which share the same vertex set. The commuting square condition can be viewed as a commuting square within this labelled graph, for the weights indicated in Figure~\ref{f:weights}. 

\begin{figure}
\centering
\includegraphics{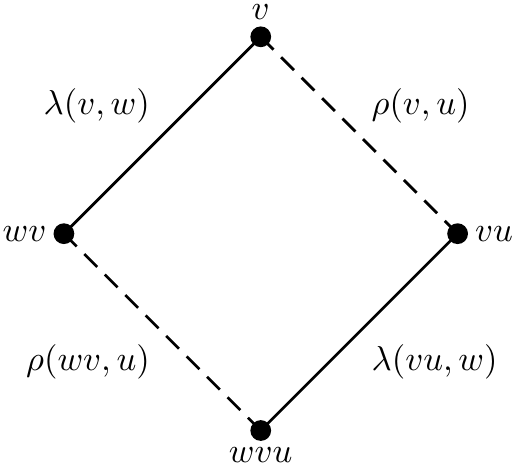}
\caption{The commuting square condition. Solid lines are paths made of edges labelled by $\lambda$-weights, and dashed lines are paths of edges labelled by $\rho$-weights.}
\label{f:weights}
\end{figure}

\begin{defn}
  Let~$\lambda$ be a left weight on~$G$. A right weight~$\rho$ on~$G$
  is a \emph{right companion} to~$\lambda$ if $(\lambda,\rho)$
  satisfies the commuting square condition at every~$(w,u)\in
  G^+\times G^+$. We call a right companion $\rho$ to~$\lambda$  \emph{canonical}
  if $\rho(r(e),e)=\lambda(s(e),e)$ for all $e\in E(G)$.
\end{defn}

\begin{prop}\label{prop:canonical}
  For any left weight~$\lambda$ on~$G$, there is a unique canonical
  right companion $\rho$ to~$\lambda$, namely $\rho=\rho_\alpha$ where
  $\alpha$ is the path weight with $\lambda=\lambda_\alpha$. Moreover,
  if~$\rho_1$ and~$\rho_2$ are both right companions to~$\lambda$,
  then $\Ralg{\rho_1}=\Ralg{\rho_2}$.
\end{prop}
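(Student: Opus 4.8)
The plan has four parts: exhibit $\rho_\alpha$ as a canonical right companion, determine the general form of an arbitrary right companion, deduce uniqueness, and deduce the statement about the algebras. For the first part I would set $\alpha=\alpha_\lambda$, so that $\lambda=\lambda_\alpha$, and take $\rho=\rho_\alpha$; this is a right weight by the path-weight/right-weight correspondence recorded earlier. To check the commuting square condition at an arbitrary pair $(w,u)$, I would fix $v\in G^+$ with $wvu\in G^+$ and substitute $\lambda(v,w)=\alpha(wv)/\alpha(v)$ and $\rho(v,u)=\alpha(vu)/\alpha(v)$: both $\rho(wv,u)\lambda(v,w)$ and $\lambda(vu,w)\rho(v,u)$ telescope to $\alpha(wvu)/\alpha(v)$, so the condition holds at every $(w,u)$. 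Moreover $\rho_\alpha$ is canonical, since for each $e\in E(G)$ both $\rho_\alpha(r(e),e)$ and $\lambda_\alpha(s(e),e)$ equal $\alpha(e)$ (using $\alpha(r(e))=\alpha(s(e))=1$).

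Now let $\rho$ be an arbitrary right companion to $\lambda$. The key point is to apply the commuting square condition at the pair $(v,u)$ with its free variable specialised to the source vertex $s(v)$; this is legitimate because $v\cdot s(v)\cdot u=vu$ lies in $G^+$ whenever $vu$ does. Using $v\,s(v)=v$, $s(v)\,u=u$, $\lambda(s(v),v)=\alpha(v)$, $\rho(s(v),u)=\rho(r(u),u)$ and $\lambda(u,v)=\alpha(vu)/\alpha(u)$, the condition collapses to $\rho(v,u)\alpha(v)=\rho(r(u),u)\,\alpha(vu)/\alpha(u)$, that is,
\[\rho(v,u)=d(u)\,\rho_\alpha(v,u)\quad\text{whenever } vu\in G^+,\qquad d(u):=\rho(r(u),u)/\alpha(u)\in(0,\infty).\]
Feeding this identity back into the right cocycle relations for $\rho$ and for $\rho_\alpha$ and cancelling the positive factor $\rho_\alpha$ shows that $d$ is multiplicative, $d(u_1u_2)=d(u_1)d(u_2)$ whenever $u_1u_2\in G^+$, and that $d(x)=1$ for $x\in V(G)$.

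Uniqueness is then immediate: if $\rho$ is canonical, then $d(e)=\rho(r(e),e)/\alpha(e)=\lambda(s(e),e)/\alpha(e)=1$ for every edge, and since every path is a product of edges, multiplicativity of $d$ forces $d\equiv1$ and hence $\rho=\rho_\alpha$. For the algebras, the displayed identity gives $R_{\rho,u}=d(u)R_{\rho_\alpha,u}$ and $\rho(u)=d(u)\rho_\alpha(u)$; since $d(u)\ne0$, the two families $\{R_{\rho,u}:\rho(u)<\infty\}$ and $\{R_{\rho_\alpha,u}:\rho_\alpha(u)<\infty\}$ are indexed by the same set and corresponding members differ by a nonzero scalar. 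A unital algebra is in particular a vector space, so these families generate the same (non-closed) unital algebra and hence the same WOT-closure, giving $\Ralg{\rho}=\Ralg{\rho_\alpha}$ for every right companion $\rho$; in particular $\Ralg{\rho_1}=\Ralg{\rho_\alpha}=\Ralg{\rho_2}$.

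The step I expect to require the most care is the source/range bookkeeping that validates the substitution $v\mapsto s(v)$ in the commuting square condition, together with the attendant simplifications and the check that the two generator families are indexed by exactly the same set; the multiplicativity of $d$ and the observation that nonzero scalar multiples generate the same unital algebra are routine. One can also bypass the multiplicativity argument for uniqueness: the canonical normalisation already forces $\rho(v,e)=\rho_\alpha(v,e)$ on all edges, and a right weight is determined by its edge values via the right cocycle condition, by the mirror of the corresponding remark for $\lambda$.
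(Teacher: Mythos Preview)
Your proposal is correct and follows essentially the same route as the paper: specialise the free variable in the commuting square condition to a vertex to obtain the scalar relation $\rho(v,u)=d(u)\,\rho_\alpha(v,u)$, deduce multiplicativity of $d$ from the cocycle conditions, and conclude both uniqueness (via $d(e)=1$ on edges) and $\Ralg{\rho_1}=\Ralg{\rho_2}$ (via nonzero scalar multiples of generators indexed by the same set). The only cosmetic difference is that the paper compares two arbitrary right companions $\rho_1,\rho_2$ directly rather than each to $\rho_\alpha$, writing $q(u)=\rho_2(r(u),u)/\rho_1(r(u),u)$ in place of your $d(u)$.
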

\begin{proof}
  Let~$\alpha=\alpha_\lambda$ be the path weight given by
  $\alpha(v)=\lambda(s(v),v)$. Then $\lambda=\lambda_\alpha$ and by an
  easy calculation, the right weight $\rho_\alpha$ (defined in the
  previous section) is a canonical right companion to~$\lambda$. 

  If $\rho_1$ and $\rho_2$ are both right companions to~$\lambda$, then
  applying the commuting square condition for~$\rho_1$ and~$\rho_2$
  with $v=r(u)=s(w)$ shows that
  $q(u):=\frac{\rho_2(r(u),u)}{\rho_1(r(u),u)}>0$ satisfies
  $\rho_2(w,u)=q(u)\rho_1(w,u)$ for any $w,u$ with $wu\in G^+$. So
  $\rho_1$ is right-bounded at~$u$ if and only if~$\rho_2$ is
  right-bounded at~$u$, and in this case
  $R_{\rho_1,u}=q(u)R_{\rho_2,u}$; hence
  $\Ralg{\rho_1}=\Ralg{\rho_2}$. 

  If~$\rho_1$ and~$\rho_2$ are both canonical right companions
  to~$\lambda$, then $q(e)=1$ for all $e\in E(G)$. Applying the
  cocycle condition for~$\rho_1$ and~$\rho_2$ to the relation
  $\rho_2=q\cdot \rho_1$ shows that $q(u_1u_2)=q(u_2)q(u_1)$ whenever
  $u_1u_2\in G^+$; hence $q(u)=1$ for all $u\in G^+$, so
  $\rho_1=\rho_2$.
\end{proof}

For~$k\geq0$, let $Q_k$ be the orthogonal projection of~$\H_G$ onto
the closed linear span of $\{ \xi_v : |v|=k \}$. For $j\in\bZ$, define
a complete contraction $\Phi_j\colon \B(\H_G)\to \B(\H_G)$ by
  \[
  \Phi_j(X) = \sum_{m\geq \max\{ 0,-j\}} Q_m X Q_{m+j}.
  \]
  Also for $k\in\bN$, define $\Sigma_k\colon \B(\H_G)\to \B(\H_G)$ via the Cesaro-type sums
  \[  \Sigma_k(X) = \sum_{|j|<k} \Big( 1 - \frac{|j|}{k} \Big) \Phi_j(X).
  \]

 The following lemma is well-known. For completeness, we include a short proof.

  \begin{lem}\label{lem:Sigma-k}
    For $k\ge0$ and $X\in \B(\H_G)$, we have $\|\Sigma_k(X)\|\leq
    \|X\|$, and $\Sigma_k(X)$ converges to~$X$ in the strong operator
    topology as~$k\to \infty$.
  \end{lem}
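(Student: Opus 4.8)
The plan is to realise $\Sigma_k$ as an averaging of unitary conjugations of $X$ against the Fej\'er kernel, so that the norm estimate collapses to the triviality that a convex average of operators of norm $\le\|X\|$ again has norm $\le\|X\|$, while the strong convergence becomes the classical Fej\'er theorem applied one vector at a time. To this end, for $t\in[-\pi,\pi]$ let $u_t\in\B(\H_G)$ be the unitary determined by $u_t\xi_v=e^{i|v|t}\xi_v$ for $v\in G^+$. For each fixed $\zeta\in\H_G$ the map $t\mapsto u_t\zeta=\sum_v e^{i|v|t}\langle\zeta,\xi_v\rangle\xi_v$ is norm-continuous by dominated convergence, and hence so is $t\mapsto u_t^*Xu_t\zeta=u_t^*X(u_t\zeta)$ for any $X\in\B(\H_G)$, by a short telescoping estimate that uses only $\|X\|<\infty$.

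The computational heart of the argument is the Fourier-coefficient identity
\[\langle\Phi_j(X)\xi,\eta\rangle=\frac1{2\pi}\int_{-\pi}^{\pi}e^{-ijt}\,\langle u_t^*Xu_t\,\xi,\eta\rangle\,dt\qquad(\xi,\eta\in\H_G,\ j\in\bZ).\]
I would check this first on basis vectors $\xi=\xi_v$, $\eta=\xi_w$, using $u_t^*Xu_t\xi_v=\sum_w e^{i(|v|-|w|)t}\langle X\xi_v,\xi_w\rangle\xi_w$ and $\frac1{2\pi}\int_{-\pi}^{\pi}e^{i\ell t}\,dt=\delta_{\ell,0}$, and then extend to all $\xi,\eta$ since both sides are bounded sesquilinear forms ($\|X\|$ dominates the integrand uniformly in $t$). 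Multiplying by $1-|j|/k$ and summing over $|j|<k$ (for $k\ge1$) then gives
\[\langle\Sigma_k(X)\xi,\eta\rangle=\frac1{2\pi}\int_{-\pi}^{\pi}F_k(t)\,\langle u_t^*Xu_t\,\xi,\eta\rangle\,dt,\qquad F_k(t):=\sum_{|j|<k}\Big(1-\tfrac{|j|}{k}\Big)e^{-ijt},\]
where $F_k$ is the Fej\'er kernel: $F_k(t)=\tfrac1k\big|\sum_{m=0}^{k-1}e^{imt}\big|^2=\tfrac1k\,\sin^2(kt/2)/\sin^2(t/2)\ge0$, and $\tfrac1{2\pi}\int_{-\pi}^{\pi}F_k=1$ (its zeroth Fourier coefficient). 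The norm bound follows at once: $|\langle\Sigma_k(X)\xi,\eta\rangle|\le\tfrac1{2\pi}\int_{-\pi}^{\pi}F_k(t)\,\|X\|\,\|\xi\|\,\|\eta\|\,dt=\|X\|\,\|\xi\|\,\|\eta\|$, since each $u_t$ is unitary (and $\Sigma_0(X)=0$, so $k=0$ is trivial).

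For the strong convergence, norm-continuity of $t\mapsto F_k(t)\,u_t^*Xu_t\xi$ upgrades the previous display to the $\H_G$-valued integral identity $\Sigma_k(X)\xi=\tfrac1{2\pi}\int_{-\pi}^{\pi}F_k(t)\,u_t^*Xu_t\xi\,dt$; subtracting $X\xi=\tfrac1{2\pi}\int_{-\pi}^{\pi}F_k(t)\,X\xi\,dt$ and using $u_0^*Xu_0\xi=X\xi$ gives
\[\|\Sigma_k(X)\xi-X\xi\|\le\frac1{2\pi}\int_{-\pi}^{\pi}F_k(t)\,\big\|u_t^*Xu_t\xi-X\xi\big\|\,dt.\]
Here $\phi(t):=\|u_t^*Xu_t\xi-X\xi\|$ is continuous with $\phi(0)=0$ and $\|\phi\|_\infty\le 2\|X\|\,\|\xi\|$, so the standard concentration property of the Fej\'er kernel — split the integral over $\{|t|<\delta\}$ and $\{\delta\le|t|\le\pi\}$, and bound $F_k(t)\le\big(k\sin^2(\delta/2)\big)^{-1}$ on the second piece — forces the right-hand side to $0$ as $k\to\infty$, giving $\Sigma_k(X)\xi\to X\xi$. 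I do not expect a genuine obstacle: the only points that want care are the interchanges of finite sums with integrals and the norm-continuity of the relevant $\H_G$-valued integrands, where the explicit diagonal form of $u_t$ is used; the Fej\'er estimate itself is entirely classical.
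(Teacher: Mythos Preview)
Your proof is correct and follows essentially the same approach as the paper: both realise $\Phi_j$ and $\Sigma_k$ via the gauge action $t\mapsto u_t^*Xu_t$ (the paper writes $U_zXU_z^*$ with $z\in\bT$) integrated against the Fej\'er kernel, obtain the norm bound from $F_k\ge0$ with $\|F_k\|_{L^1}=1$, and deduce SOT convergence from the approximate identity property of the Fej\'er kernel together with the strong continuity of $t\mapsto u_t^*Xu_t\xi$. Your write-up is a bit more explicit about the continuity verifications and the Fej\'er concentration estimate, but the underlying argument is the same.
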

  \begin{proof}
    Let $z\in \bT$ and let $U_z$ be the diagonal unitary operator on
    $\H_G$ for which $U_z\xi_v = z^{|v|}\xi_v$ for each $v \in
    G^+$. Then $U_zQ_sXQ_tU_z^* = z^{s-t}Q_sXQ_t$ for any $s,t\ge0$.
     Since $\sum_{\ell\ge0}Q_\ell=I$, it follows that for $j\in \bZ$, we have
    \[
    \Phi_j(X) = \int _{|z|=1}z^{j}U_zXU_z^*dz.
    \]
    Writing $F_k(z)=\sum_{j=-k}^k(1-\frac{|j|}{k+1})z^j$ for the usual Fej\'er kernel,
    we see that\[
    \Sigma_k(X) = \int_{|z|=1}F_{k-1}(z)U_zXU_z^*dz.
    \]
   Considering the scalars $\langle \Sigma_k(X)\xi, \zeta  \rangle$, for $\xi, \zeta \in \H_G$, and the fact that $\|F_{k-1}\|_{L^1(\bT)}=1$, it follows that    
          $\|\Sigma_k(X)\| \leq \|X\|$ for all $k$.
          
          Let $\xi \in \H_G$. Then
\[
    \|(X-\Sigma_k(X))\xi\| \leq \int_{|z|=1}F_{k-1}(z)\|(X - U_zXU_z^*)\xi\|dz.
    \]
The operators $U_z, U_z^*$ converge to the identity operator in the strong operator topology, as $z$ tends to $1$, and $F_k$ tends weak star to the unit point mass measure at $z=1$ as $k\to \infty$.  It follows that $\Sigma_k(X)\xi \to X\xi$ as~$k\to \infty$, and so $\Sigma_k(X)\sotto X$.
  \end{proof}

\begin{lem}\label{lem:Xf}
  Let $\rho$ be a right weight on~$G$ and suppose that $f\colon G^+\to
  \bC$ has the property that if $f(u)\ne0$, then $\rho(u)<\infty$.
  Let~$\H_0$ be the dense subspace of~$\H_G$ spanned by $\{\xi_v\colon
  v\in G^+\}$, and consider the sesquilinear form $A_f\colon
  \H_0\times\H_0\to \bC$ with \[A_f(\xi_v,\xi_w)=
  \begin{cases}
    f(u)\rho(v,u)&\text{if $w=vu$ for some $u\in G^+$}\\
    0&\text{otherwise}.
  \end{cases}\]
  If~$A_f$ is bounded on $\H_0\times \H_0$, then the operator $X_f\in
  \B(\H_G)$ implementing the continuous extension of~$A_f$
  to~$\H_G\times\H_G$ satisfies~$X_f\in \Ralg{\rho}$.
\end{lem}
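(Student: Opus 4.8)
\section*{Proof proposal}

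The plan is to realise $X_f$ as a strong-operator limit of operators that manifestly lie in $\Ralg{\rho}$, by decomposing $X_f$ into its homogeneous parts with respect to the path-length grading of~$\H_G$ and recombining them by the Cesaro-type averaging of Lemma~\ref{lem:Sigma-k}.

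First I would record the triangular structure of $X_f$ relative to the projections $Q_k$. Since $\langle X_f\xi_v,\xi_w\rangle$ can be nonzero only when $w=vu$ with $|w|=|v|+|u|$, we get $\Phi_j(X_f)=0$ for $j>0$, while for $n\ge0$ the operator $\Phi_{-n}(X_f)$ is the degree-$n$ part, acting by $\Phi_{-n}(X_f)\xi_v=\sum_{|u|=n,\,vu\in G^+}f(u)\rho(v,u)\xi_{vu}$; informally, $\Phi_{-n}(X_f)=\sum_{|u|=n}f(u)R_{\rho,u}$, matching the heuristic $X_f=\sum_u f(u)R_{\rho,u}$. This sum is genuinely infinite when $G$ is infinite, and controlling it is the heart of the matter.

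Next I would show $\Phi_{-n}(X_f)\in\Ralg{\rho}$ for each fixed~$n$. For a finite subset $F\subseteq\{u\in G^+:|u|=n\}$ put $X_F=\sum_{u\in F}f(u)R_{\rho,u}$; this lies in $\Ralg{\rho}$ because each $u$ with $f(u)\ne0$ has $\rho(u)<\infty$, so $R_{\rho,u}$ is one of the generators. The crucial point is the uniform bound $\|X_F\|\le\|X_f\|$, and this uses that $F$ consists of paths of a single length: unique factorisation of paths gives $\xi_{vu}=\xi_{v'u'}\iff(v,u)=(v',u')$ for $u,u'$ of length~$n$, whence $\langle X_F^*X_F\xi_v,\xi_{v'}\rangle=\langle X_F\xi_v,X_F\xi_{v'}\rangle$ vanishes for $v\ne v'$, so $X_F^*X_F$ is diagonal with $v$-th entry $\|X_F\xi_v\|^2=\sum_{u\in F,\,vu\in G^+}|f(u)|^2\rho(v,u)^2\le\|X_f\xi_v\|^2\le\|X_f\|^2$. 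Since also $X_F\xi_v\to\Phi_{-n}(X_f)\xi_v$ in norm as $F$ increases to $\{u:|u|=n\}$ (because $\Phi_{-n}(X_f)\xi_v=Q_{|v|+n}X_f\xi_v$ is a norm-convergent orthogonal sum over $\{u:|u|=n,\,vu\in G^+\}$ of which $X_F\xi_v$ is a partial sum, using that $X_f\in\B(\H_G)$), the uniform bound upgrades this to $X_F\sotto\Phi_{-n}(X_f)$, and hence $\Phi_{-n}(X_f)\in\Ralg{\rho}$.

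Finally, because $\Phi_j(X_f)=0$ for $j>0$, the Cesaro means collapse to $\Sigma_k(X_f)=\sum_{0\le n<k}(1-\tfrac nk)\Phi_{-n}(X_f)$, a finite linear combination of elements of $\Ralg{\rho}$ and hence an element of $\Ralg{\rho}$; and $\Sigma_k(X_f)\sotto X_f$ by Lemma~\ref{lem:Sigma-k}. As SOT convergence implies WOT convergence and $\Ralg{\rho}$ is WOT-closed, we conclude $X_f\in\Ralg{\rho}$. The one genuine obstacle is the uniform boundedness of the truncations $X_F$ for infinite~$G$; this is exactly what the diagonality of $X_F^*X_F$ — itself a consequence of working one path-length at a time — delivers.
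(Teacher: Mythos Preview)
Your proof is correct and follows essentially the same strategy as the paper: decompose $X_f$ into its homogeneous parts $\Phi_j(X_f)$, approximate each by finite linear combinations of the $R_{\rho,u}$, use the orthogonality of the family $\{X_F\xi_v\}_v$ (coming from unique factorisation at fixed path length) to get a uniform norm bound, and then recombine via the Ces\`aro sums $\Sigma_k$ and Lemma~\ref{lem:Sigma-k}. The only cosmetic difference is that the paper indexes its truncations by an increasing sequence of finite subgraphs $G_n\uparrow G$ (with projections $P_n$), obtaining $\Phi_j(X_f)$ as a WOT-limit of $F_{j,n}=\sum_{u\in G_n^+,\,|u|=-j}f(u)R_{\rho,u}$ after peeling off the cross terms $P_nF_{j,n}P_n^\perp$ and $P_n^\perp F_{j,n}$; your choice to truncate directly by finite subsets $F\subseteq\{u:|u|=n\}$ yields SOT convergence with one fewer step, but the underlying idea and the key orthogonality estimate are identical.
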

\begin{proof}
  Let $(G_1,G_2,\dots)$ be a sequence of finite subgraphs of $G$ which
  increases to $G$; that is, $V(G_n)$ and $E(G_n)$ are finite sets for
  each~$n$, and $V(G_n)\uparrow V(G)$ and $E(G_n)\uparrow E(G)$.  For
  $n\ge1$, let $P_n$ be the projection onto the closure of the subspace
  of~$\H_G$ spanned by $\{\xi_v\colon v\in G_n^+\}$.  For $v,w\in
  G^+$, a calculation shows that $\langle
  P_n\Phi_j(X_f)P_n\xi_v,\xi_w\rangle=0$ unless $v,w\in G_n^+$ with
  $w=vu$ for some $u\in G_n^+$ with $|u|=-j$ and $\rho(u)<\infty$; and
  that in the latter case,
  \[ \langle
  P_n\Phi_j(X_f)P_n\xi_v,\xi_w\rangle=A_f(\xi_v,\xi_w)=f(u)\rho(v,u).\]
  It follows that $P_n\Phi_j(X_f)P_n= P_nF_{j,n} P_n$ where
  \[ F_{j,n}=
    \displaystyle\sum_{\substack{u\in G_n^+,\,|u|=-j,\\\rho(u)<\infty}} f(u)R_{\rho,u}.
    \] (We have $F_{j,n}=0$ if $j>0$.) Since $V(G_n)$ and $E(G_n)$ are
    finite, $F_{j,n}$ is a finite linear combination of operators
    $R_{\rho,u}$, so $F_{j,n}\in \Ralg{\rho}$.  Now $P_n\sotto I$,
    so \[\Phi_j(X_f)=\sotlim_{n\to \infty} P_n F_{j,n} P_n.\] In fact,
    we will shortly see that \[\Phi_j(X_f)=\wotlim_{n\to \infty}
    F_{j,n}.\] From this, it follows that $\Phi_j(X_f)\in \Ralg{\rho}$, so
    $\Sigma_k(X_f)\in \Ralg{\rho}$ for all $k\ge1$, allowing us to
    conclude, by Lemma~\ref{lem:Sigma-k}, that
    $X_f=\sotlim_{k\to\infty}\Sigma_k(X_f)\in \Ralg{\rho}$
    as desired.

    To see this, we will first show that $\{\|F_{j,n}\|\colon n\ge
    1\}$ is bounded. Note that for any~$v\in G^+$, we have
    the norm-convergent sums
  \[ X_f\xi_v=\sum_{w\in G^+} \langle X_f\xi_v,\xi_w\rangle \xi_w = \sum_{w\in G^+}A_f(\xi_v,\xi_w)\xi_w = \sum_{u\in G^+}f(u)\rho(v,u)\xi_{vu}.\]
  Moreover, 
  \[ F_{j,n}\xi_v=\sum_{\substack{u\in G_n^+,\,|u|=-j,\\\rho(u)<\infty}}f(u)\rho(v,u)\xi_{vu}\]
  so $\|F_{j,n}\xi_v\|\leq \|X_f\|$. For $i=1,2$, if $v_iu_i\in G^+$ and $|u_1|=|u_2|=-j$, then $v_1\ne v_2\implies v_1u_1\ne v_2u_2$.  It follows that $\{ F_{j,n}\xi_v\colon v\in G^+\}$ is a pairwise orthogonal family of vectors for each $n\ge1$, hence $F_{j,n}=\sum^{\oplus}_{v\in G^+} F_{j,n}\xi_v\xi_v^*$ and so
  \[ \|F_{j,n}\|=\sup_{v\in G^+} \|F_{j,n} \xi_v\|\leq \|X_f\|.\]
  Now $P_n^\perp:=I-P_n\sotto 0$ as $n\to \infty$, so  $P_nF_{j,n}P_n^\perp\sotto0$ and $P_n^\perp F_{j,n}\wotto0$ as $n\to \infty$. Hence
  \[ F_{j,n}=P_nF_{j,n}P_n+P_nF_{j,n}P_n^\perp+P_n^\perp F_{j,n}\wotto \Phi_j(X_f)  \quad\text{as $n\to \infty$},\]
which completes the proof.\end{proof}

\begin{rem}
  It is not difficult to see that if the function~$f$ in
  Lemma~\ref{lem:Xf} has finite support, then
  \[ X_f= \sum_{\substack{u\in G^+,\\\rho(u)<\infty}}f(u)R_{\rho,u}.\]
  Heuristically, it is useful to think of $X_f$ as the formal series
  given by this formula even when the support of~$f$ is
  infinite.
\end{rem}

\begin{lem}\label{lem:ker}
  Let~$\lambda$ be a left-bounded left weight on~$G$. If~$K\in \Lalg{\lambda}'$
  and $K\xi_x=0$ for all $x\in V(G)$, then $K=0$.
\end{lem}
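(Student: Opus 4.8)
The plan is to exploit the structure of a general element $A\in\Lalg{\lambda}$ together with the commutation relation $KA=AK$ to propagate the vanishing of $K$ from the vacuum vectors $\xi_x$ to all basis vectors $\xi_v$. The key observation is that every basis vector is reachable from a vacuum vector by a weighted left shift: if $v\in G^+$ then $L_{\lambda,v}\xi_{s(v)}=\lambda(s(v),v)\,\xi_v$, and $\lambda(s(v),v)>0$ since $v=v\,s(v)\in G^+$. Because $\lambda$ is left-bounded, each $L_{\lambda,v}$ lies in $\Lalg{\lambda}$, so $K$ commutes with it.

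The first step is therefore the computation
\[
\lambda(s(v),v)\,K\xi_v = K L_{\lambda,v}\xi_{s(v)} = L_{\lambda,v}K\xi_{s(v)} = L_{\lambda,v}\cdot 0 = 0
\]
for every $v\in G^+$, using the hypothesis $K\xi_{s(v)}=0$. Dividing by the strictly positive scalar $\lambda(s(v),v)$ gives $K\xi_v=0$ for all $v\in G^+$. The second step is to note that $\{\xi_v\colon v\in G^+\}$ is an orthonormal basis for $\H_G$, so $K$ vanishes on a spanning set and hence $K=0$.

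I do not anticipate a genuine obstacle here; the only thing to be careful about is making sure the generators $L_{\lambda,v}$ used are actually bounded and hence genuinely in $\Lalg{\lambda}$ — this is exactly guaranteed by the standing assumption that $\lambda$ is left-bounded (so $\lambda(v)<\infty$ for all $v$, in particular at $v$ itself, and by the remark following the definition of $\Lalg{\lambda}$ these shifts belong to the algebra). One could equivalently argue using only the edge generators $L_{\lambda,e}$ and induct on $|v|$: if $K\xi_v=0$ and $ev\in G^+$, then $\lambda(v,e)K\xi_{ev}=KL_{\lambda,e}\xi_v=L_{\lambda,e}K\xi_v=0$, and $\lambda(v,e)>0$, so $K\xi_{ev}=0$; the base case $|v|=0$ is the hypothesis. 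Either formulation is short, and the induction version has the mild advantage of only invoking boundedness of $\lambda$ at edges.
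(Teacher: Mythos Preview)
Your argument is correct and is essentially identical to the paper's own proof: both write $\xi_v=\lambda(s(v),v)^{-1}L_{\lambda,v}\xi_{s(v)}$, invoke left-boundedness so that $L_{\lambda,v}\in\Lalg{\lambda}$, and use the commutation $KL_{\lambda,v}=L_{\lambda,v}K$ together with $K\xi_{s(v)}=0$ to conclude $K\xi_v=0$ for all $v\in G^+$. Your alternative inductive formulation via edge generators is also fine and is just a repackaging of the same idea.
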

\begin{proof}
  Given $w\in G^+$, consider $x=s(w)$. We have
  \[
  K\xi_w=\lambda(x,w)^{-1}KL_{\lambda,w}\xi_x=\lambda(x,w)^{-1}L_{\lambda,x}K\xi_x=0,\]
  so $K=0$.
\end{proof}

We are now ready to prove our main result. 

\begin{thm}\label{commutant_thm}
  If $\lambda$ is a left-bounded left weight on~$G$ and~$\rho$
  is its canonical right companion, then the commutant of
  $\Lalg{\lambda}$ coincides with~$\Ralg{\rho}$.
\end{thm}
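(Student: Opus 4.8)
The plan is to prove the two inclusions $\Ralg{\rho}\subseteq\Lalg{\lambda}'$ and $\Lalg{\lambda}'\subseteq\Ralg{\rho}$ separately; the first is immediate from what has been set up, and the second is where the work lies. For the first, recall that left-boundedness of $\lambda$ means $\Lalg{\lambda}$ is the WOT-closed unital algebra generated by $\{L_{\lambda,w}:w\in G^+\}$. Since $\rho$ is a right companion to $\lambda$, the commuting square condition holds at every pair $(w,u)$, which (whenever $\rho(u)<\infty$) translates to $R_{\rho,u}L_{\lambda,w}=L_{\lambda,w}R_{\rho,u}$ for all $w$. So every generator $R_{\rho,u}$ of $\Ralg{\rho}$ lies in the commutant $\Lalg{\lambda}'$, a WOT-closed unital algebra, and hence $\Ralg{\rho}\subseteq\Lalg{\lambda}'$.

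For the reverse inclusion I would fix $K\in\Lalg{\lambda}'$ and set $\alpha=\alpha_\lambda$, so that $\lambda=\lambda_\alpha$ and, by Proposition~\ref{prop:canonical}, $\rho=\rho_\alpha$; then $\lambda(v,w)=\alpha(wv)/\alpha(v)$ and $\rho(v,u)=\alpha(vu)/\alpha(v)$ when these are defined, $\lambda(s(w),w)=\alpha(w)$, and $\alpha\equiv1$ on $V(G)$. The first observation is that for $x\in V(G)$ the generator $L_{\lambda,x}$ is the orthogonal projection onto $\overline{\spn}\{\xi_v:r(v)=x\}$ and fixes $\xi_x$, so commutativity forces $K\xi_x=\sum_{v:\,r(v)=x}c_{x,v}\xi_v$ with $\sum_v|c_{x,v}|^2=\|K\xi_x\|^2\le\|K\|^2$. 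Next, $L_{\lambda,w}\xi_{s(w)}=\alpha(w)\xi_w$ and $KL_{\lambda,w}=L_{\lambda,w}K$ give, for each $w\in G^+$,
\[ K\xi_w=\alpha(w)^{-1}L_{\lambda,w}K\xi_{s(w)}=\sum_{v:\,r(v)=s(w)}\frac{c_{s(w),v}}{\alpha(v)}\cdot\frac{\alpha(wv)}{\alpha(w)}\,\xi_{wv},\]
a sum of pairwise orthogonal vectors; so $K$ is determined by the family $(c_{x,v})$.

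The key step is then to realise $K$ as an operator of the form $X_f$ from Lemma~\ref{lem:Xf}. I would put $f\colon G^+\to\bC$, $f(u)=c_{r(u),u}/\alpha(u)$ (this formula being forced by requiring $X_f\xi_x=K\xi_x$), and then verify the two hypotheses needed. For the support condition: if $c_{r(u),u}\ne0$ then, for every $v$ with $s(v)=r(u)$, the expansion of $K\xi_v$ contains the orthogonal term $\tfrac{c_{r(u),u}}{\alpha(u)}\cdot\tfrac{\alpha(vu)}{\alpha(v)}\,\xi_{vu}$, so $\|K\|\ge\|K\xi_v\|\ge|c_{r(u),u}|\,\alpha(vu)\,\alpha(u)^{-1}\alpha(v)^{-1}$, i.e.\ $\rho(v,u)=\alpha(vu)/\alpha(v)\le\|K\|\,\alpha(u)/|c_{r(u),u}|$; since $\rho(v,u)=0$ for all other $v$, this gives $\rho(u)\le\|K\|\,\alpha(u)/|c_{r(u),u}|<\infty$. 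For the agreement with $X_f$: reading off matrix coefficients, $\la K\xi_v,\xi_w\ra=0$ unless $w=vu$ with $vu\in G^+$, in which case $\la K\xi_v,\xi_w\ra=c_{s(v),u}\,\alpha(vu)\,\alpha(v)^{-1}\alpha(u)^{-1}=f(u)\rho(v,u)$ (using $s(v)=r(u)$). Hence the form $A_f$ of Lemma~\ref{lem:Xf} coincides on the standard basis with $(\xi,\zeta)\mapsto\la K\xi,\zeta\ra$, so $A_f$ is bounded and the operator $X_f$ implementing its continuous extension is exactly $K$. Lemma~\ref{lem:Xf} then gives $K=X_f\in\Ralg{\rho}$, finishing the argument. (Once $A_f$ is known to be bounded one could instead conclude via Lemma~\ref{lem:ker}, since $X_f\in\Ralg{\rho}\subseteq\Lalg{\lambda}'$ and $K-X_f$ kills every vacuum vector.)

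The hard part, I think, is not the computations but spotting the structural subtlety behind the support condition: the canonical right companion $\rho$ of a left-bounded $\lambda$ need \emph{not} be right-bounded, so ``$f(u)\ne0\Rightarrow\rho(u)<\infty$'' is a genuine condition that has to be deduced — as above — from the boundedness of $K$ before Lemma~\ref{lem:Xf} can be applied. Everything else (guessing the formula for $f$, and keeping the left/right cocycle identities and the $\alpha$-formulas for $\lambda$ and $\rho$ consistent) is routine bookkeeping.
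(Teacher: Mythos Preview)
Your proof is correct and follows the same overall architecture as the paper's: compute the matrix coefficients $\langle K\xi_v,\xi_w\rangle$ via commutation with $L_{\lambda,v}$, deduce the support condition $f(u)\ne0\Rightarrow\rho(u)<\infty$ from $\|K\|<\infty$, identify the sesquilinear form $A_f$, and invoke Lemma~\ref{lem:Xf}. The one substantive difference is in how you establish boundedness of $A_f$. You observe directly that $A_f(\xi_v,\xi_w)=\langle K\xi_v,\xi_w\rangle$ on basis pairs, hence $A_f(\xi,\eta)=\langle K\xi,\eta\rangle$ on $\H_0\times\H_0$ by sesquilinearity, giving $|A_f(\xi,\eta)|\le\|K\|\,\|\xi\|\,\|\eta\|$ and in fact $X_f=K$ immediately. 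The paper instead reaches the same bound via the Ces\`aro operators $\Sigma_k$, showing $A_f(\xi,\eta)=\lim_k\langle\Sigma_k(S)\xi,\eta\rangle$ and appealing to Lemma~\ref{lem:Sigma-k}; it then needs Lemma~\ref{lem:ker} to conclude $X_f=S$. Your route is more elementary here and sidesteps both Lemma~\ref{lem:Sigma-k} and Lemma~\ref{lem:ker} in this proof (though Lemma~\ref{lem:Sigma-k} is of course still used inside Lemma~\ref{lem:Xf}). Working with the explicit path weight $\alpha$ rather than the abstract commuting-square identity is a cosmetic choice, not a difference in substance.
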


\begin{proof}
The observations at the start of this section show that
$L_{\lambda,w}$ commutes with~$R_{\rho,u}$ whenever $w,u\in G^+$ and
$\rho(u)<\infty$. Hence $\Lalg{\lambda}^\prime$ contains $\Ralg{\rho}$.

To prove the other inclusion, begin by fixing $S \in \Lalg{\lambda}^\prime$. For~$u\in G^+$, consider the coefficients $a_u\in \bC$ defined by
\[ a_u=\langle S\xi_{r(u)},\xi_u\rangle.\]
Observe that for any~$x\in V(G)$, the operator~$L_{\lambda,x}=L_x$ is a projection with range spanned by $\{\xi_u\colon u\in r^{-1}(x)\}$, and $L_{x} S\xi_x=SL_{x}\xi_x=S\xi_x$. Hence 
\[ S\xi_x=\sum_{u\in G^+} \langle S\xi_x,\xi_u\rangle \xi_u=\sum_{u\in r^{-1}(x)} a_u\xi_u\]
with convergence in norm.

If~$v,w\in G^+$, then $\xi_v=\lambda(s(v),v)^{-1}L_{\lambda,v}
\xi_{s(v)}$ and $[S,L_{\lambda,v}]=0$, and $L_{\lambda,v}^*\xi_w=0$ unless $w=vu$ for some~$u\in G^+$, and $L_{\lambda,v}^*\xi_{vu}=\lambda(u,v)\xi_u$. Now $\frac{\lambda(u,v)}{\lambda(s(v),v)}=\frac{\rho(v,u)}{\rho(r(u),u)}$ by the commuting square condition, and it follows that
\begin{align*}
  \langle S\xi_v,\xi_w\rangle &=
  \begin{cases}
    \frac{\rho(v,u)}{\rho(r(u),u)}a_u &\text{if $w=vu$ for some $u\in G^+$}\\0&\text{otherwise}.
  \end{cases}
\end{align*}
In particular, if $a_u\ne 0$, then $\rho(u)<\infty$ since
\[ \|S\|\ge\sup_{\{v\in G^+\colon vu\in G^+\}}|\langle S\xi_v,\xi_{vu}\rangle | = \sup_{v\in G^+}\frac{\rho(v,u)}{\rho(r(u),u)}|a_u|=\frac{\rho(u)}{\rho(r(u),u)}|a_u|.\]

In view of this, if we define $f\colon G^+\to \bC$ by $f(u)=a_u
\rho(r(u),u)^{-1}$, then we may legitimately consider the bilinear form
$A_{f}\colon \H_0\times \H_0\to \bC$, defined as in
Lemma~\ref{lem:Xf}. 
Consider the operators $\Sigma_{k}(S)$. If~$v,w\in G^+$ and $\big||w|-|v|\big|<k$, then 
\begin{align*}
  \langle \Sigma_{k}(S)\xi_v,\xi_w\rangle 
&= \sum_{|j|<k}\left(1-\frac{|j|}k\right)\sum_{m\geq \max\{0,-j\}}\langle SQ_{m+j}\xi_v,Q_m\xi_w\rangle\\
&= \left(1-\frac{\big||w|-|v|\big|}k\right)\langle S\xi_v,\xi_w\rangle\\
&=
\begin{cases}
  \left(1-\frac{|w|-|v|}k\right)f(u)\rho(v,u)&\text{if $w=vu$ for some $u\in G^+$}\\0&\text{otherwise}
\end{cases}\\
&= \left(1-\frac{|w|-|v|}k\right)A_f(\xi_v,\xi_w).
\end{align*}
Hence for any $\xi,\eta\in \H_0$,  we have
$A_f(\xi,\eta)=\lim_{k\to \infty}\langle \Sigma_k(S)\xi,\eta\rangle$, so by Lemma~\ref{lem:Sigma-k},
\[ |A_f(\xi,\eta)|\leq \sup_{k\ge1}\|\Sigma_k(S)\|\,\|\xi\|\,\|\eta\|\leq \|S\|\,\|\xi\|\,\|\eta\|.\]
Thus~$A_f$ is bounded on $\H_0\times \H_0$. By Lemma~\ref{lem:Xf}, the bounded
linear operator $X=X_f$ implementing $A_f$ is in $\Ralg{\rho}$. So~$X\in
\Lalg{\lambda}'$, and (as above) we conclude that for $x\in V(G)$, the
vector $X\xi_x$ is in the closed subspace spanned by $\{\xi_u\colon
u\in r^{-1}(x)\}$. Moreover, for any $u\in r^{-1}(x)$ we have
\[ \langle X\xi_x,\xi_u\rangle = A_f(\xi_x,\xi_u)=f(u)\rho(x,u)=a_u=\langle S\xi_x,\xi_u\rangle.\]
So $X\xi_x=S\xi_x$ for all $x\in V(G)$. Since $X,S\in \Lalg{\lambda}'$, we have $K=X-S\in \Lalg{\lambda}'$ and $K\xi_x=0$ for all $x\in V(G)$. Hence $S=X$ by Lemma~\ref{lem:ker}, and so $S\in \Ralg{\rho}$, which completes the proof.
\end{proof}

\begin{rem}
This result generalizes and improves on a few previous results. The case of the single vertex and single edge graph yields classical single variable weighted shift operators, and there, the notion of right-boundedness simply corresponds to the weight sequence being bounded below. Hence this result generalizes the fundamental commutant theorem for weighted Bergman spaces $H^\infty(\beta)$ \cite{Shields74}. In the case of a single vertex graph with~$n$ edges and unit weights this result captures the commutant theorem for free semigroup algebras $\L_n$ \cite{davidsonpitts2}, and it improves on the commutant result of \cite{kribs04}, which established a special case of the theorem in the single vertex 
multi-edged weighted shift case. Finally, this result generalizes the commutant theorem for free semigroupoid algebras \cite{kribspower} which
are determined by general unweighted directed graphs.
\end{rem}

  \newcommand{\opposite}[1]{{{#1}^t}} 

\section{Double Commutant Theorems}

  When $\rho$ is right-bounded, we obtain the following mirror image of
  Theorem~\ref{commutant_thm} which may be established with a flipped
  version of the preceding proof. For brevity, we will instead pass
  to the opposite graph~$\opposite{G}$ of~$G$, which is essentially
  ``$G$ with the edges reversed''. More formally, we set $V(\opposite G)=V(G)$, $E(\opposite G)=E(G)$ and
  $(\opposite {G})^+=\{\opposite v\colon v\in G^+\}$, where
  $\opposite{(wv)}=\opposite{v}\opposite{w}$ for $wv\in G^+$, and
  $\opposite{u}=u$ for $u\in V(G)\cup E(G)$; the source and range maps
  for~$\opposite{G}$ are given by $ \opposite{s}(\opposite v)=r(v)$
  and $\opposite r(\opposite v)=s(v)$.

\begin{thm}
  If~$\lambda$ is a left weight on~$G$ whose canonical right
  companion~$\rho$ is right-bounded, then the commutant of $\Ralg{\rho}$
  coincides with $\Lalg{\lambda}$.
\end{thm}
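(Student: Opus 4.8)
The plan is to deduce this from Theorem~\ref{commutant_thm} by transporting the situation to the opposite graph~$\opposite G$. Write $\alpha=\alpha_\lambda$ for the path weight with $\lambda=\lambda_\alpha$ and $\rho=\rho_\alpha$ (Proposition~\ref{prop:canonical}), and let $\opposite\alpha$ be the path weight on~$\opposite G$ with $\opposite\alpha(\opposite v)=\alpha(v)$ for all $v\in G^+$. Set $\mu=\lambda_{\opposite\alpha}$ and $\nu=\rho_{\opposite\alpha}$, so that $\mu$ is a left weight on~$\opposite G$ whose canonical right companion is~$\nu$, by Proposition~\ref{prop:canonical}. Unwinding the definitions of $\lambda_\alpha$ and $\rho_\alpha$ on the two graphs, one finds $\mu(\opposite v,\opposite u)=\rho(v,u)$ and $\nu(\opposite v,\opposite w)=\lambda(v,w)$ for all $u,v,w\in G^+$ (with the understanding that both sides of each identity vanish exactly when the relevant paths fail to compose), whence $\mu(\opposite u)=\rho(u)$ and $\nu(\opposite w)=\lambda(w)$ for all $u,w\in G^+$. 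Let $W\colon\H_G\to\H_{\opposite G}$ be the unitary determined by $W\xi_v=\xi_{\opposite v}$; a direct check on basis vectors then shows that $WR_{\rho,u}W^*=L_{\mu,\opposite u}$ and $WL_{\lambda,w}W^*=R_{\nu,\opposite w}$ whenever the operators in question are bounded.

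Next I would assemble the two algebra identifications. Conjugation by~$W$ is a unital $*$-isomorphism $\B(\H_G)\to\B(\H_{\opposite G})$, hence a WOT-homeomorphism that carries the WOT-closed unital algebra generated by a family of operators onto the one generated by its image, and that satisfies $(WAW^*)'=WA'W^*$. Since $\rho$ is right-bounded, $\mu(\opposite u)=\rho(u)<\infty$ for all~$u$, so $\mu$ is a \emph{left-bounded} left weight on~$\opposite G$, and $W$ maps the generating set $\{R_{\rho,u}:u\in G^+\}$ of $\Ralg{\rho}$ bijectively onto the generating set $\{L_{\mu,w'}:w'\in(\opposite G)^+\}$ of $\Lalg[\opposite G]{\mu}$; thus $W\,\Ralg{\rho}\,W^*=\Lalg[\opposite G]{\mu}$. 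Likewise, using $\lambda(w)=\nu(\opposite w)$, $W$ maps $\{L_{\lambda,w}:\lambda(w)<\infty\}$ bijectively onto $\{R_{\nu,u'}:\nu(u')<\infty\}$, so $W\,\Lalg{\lambda}\,W^*=\Ralg[\opposite G]{\nu}$. Applying Theorem~\ref{commutant_thm} to the left-bounded left weight~$\mu$ on~$\opposite G$, whose canonical right companion is~$\nu$, gives $(\Lalg[\opposite G]{\mu})'=\Ralg[\opposite G]{\nu}$, and therefore
\[
 \Ralg{\rho}'=\bigl(W^*\Lalg[\opposite G]{\mu}\,W\bigr)'=W^*\bigl(\Lalg[\opposite G]{\mu}\bigr)'W=W^*\Ralg[\opposite G]{\nu}\,W=\Lalg{\lambda},
\]
as desired.

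I do not anticipate a genuine obstacle: the mathematical content is already carried by Theorem~\ref{commutant_thm}, and what remains is careful bookkeeping with the involution $v\mapsto\opposite v$. The one point meriting attention is the alignment of the boundedness hypotheses — it is precisely right-boundedness of~$\rho$ that turns~$\mu$ into a left-bounded left weight on~$\opposite G$, which is exactly the hypothesis Theorem~\ref{commutant_thm} requires; conversely no left-boundedness assumption on~$\lambda$ itself is needed, since the possibly unbounded operators $L_{\lambda,w}$ correspond under~$W$ to the possibly unbounded $R_{\nu,\opposite w}$ and are excluded from both generating sets in the same way. (As a consistency check, the intertwining relations $WR_{\rho,u}W^*=L_{\mu,\opposite u}$ and $WL_{\lambda,w}W^*=R_{\nu,\opposite w}$ merely re-express the commuting square condition relating~$\lambda$ and~$\rho$ as the automatic commutation of left and right weighted shifts on~$\H_{\opposite G}$.)
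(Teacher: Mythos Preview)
Your proof is correct and follows essentially the same approach as the paper: both pass to the opposite graph, transport the weights via the path weight (your~$\mu,\nu$ are exactly the paper's $\opposite\rho,\opposite\lambda$), conjugate by the obvious unitary between $\H_G$ and $\H_{\opposite G}$, and invoke Theorem~\ref{commutant_thm}. The only cosmetic difference is that your unitary goes $\H_G\to\H_{\opposite G}$ whereas the paper's goes the other way, and you make the path-weight intermediary~$\opposite\alpha$ explicit rather than defining $\opposite\rho$ and $\opposite\lambda$ directly.
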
 

\begin{proof}
  Let~$\opposite{G}$ be the opposite graph of $G$ and let~$\opposite
  \rho(\opposite v,\opposite u)={\rho}(v,u)$ for $v,u\in G^+$; since
  $\rho$ is a right-bounded right weight on~$G$, it follows that
  $\opposite\rho$ is a left-bounded left weight on~$G^t$. A
  calculation using the path weight associated with $\lambda$
  and~$\rho$ shows that the canonical right companion
  to~$\opposite\rho$ is the right weight~$\opposite{\lambda}$ on~$G^t$
  given by $\opposite\lambda(v^t,w^t)=\lambda(v,w)$. Let $U\colon
  \H_{\opposite G}\to \H_G$ be the unitary with $U\xi_{\opposite
    v}=\xi_v$. For $u,w\in G^+$ with $\lambda(w)<\infty$, by checking
  values on basis vectors we see that
  \[UL_{\rho^t,\opposite u}U^*=R_{\rho,u} \qand
  UR_{\lambda^t,\opposite w}U^*=L_{\lambda,w},\] so
  \[U\Lalg[\opposite{G}]{\opposite{{\rho}}} U^*=\Ralg{\rho} \qand 
  U \Ralg[\opposite{G}]{\opposite{{\lambda}}}U^*=\Lalg{\lambda}.\] By
  Theorem~\ref{commutant_thm}, 
  $\Lalg[\opposite{G}]{\opposite{{\rho}}}'=\Ralg[\opposite{G}]{\opposite{\lambda}}$, hence
  \begin{align*}
    \Ralg{\rho}'=(U\Lalg[\opposite{G}]{\opposite{{\rho}}}U^*)'&=U\Lalg[\opposite{G}]{\opposite {\rho}}'U^*\\&=U\Ralg[\opposite{G}]{\opposite{{\lambda}}}U^*=\Lalg{\lambda}.\qedhere
  \end{align*}
\end{proof}

Combining the previous two results leads us to the following double commutant theorem.

\begin{thm}\label{preconj}
If~$\lambda$ is a left-bounded left weight on~$G$ whose canonical right companion~$\rho$ is right-bounded, then  $\Lalg{\lambda}$ coincides with its double commutant: \[\Lalg{\lambda}'' = \Lalg{\lambda}.\] 
\end{thm}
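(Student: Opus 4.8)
The plan is to derive the double commutant property by chaining together the two commutant theorems just proved. Under the stated hypotheses, $\lambda$ is left-bounded, so Theorem~\ref{commutant_thm} applies directly: it identifies the commutant as $\Lalg{\lambda}' = \Ralg{\rho}$, where $\rho$ is the canonical right companion to~$\lambda$. Taking commutants of both sides, this gives $\Lalg{\lambda}'' = \Ralg{\rho}'$.

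To finish, I would invoke the preceding (mirror) theorem, whose hypothesis is exactly that $\lambda$ is a left weight whose canonical right companion~$\rho$ is right-bounded --- which is the second part of our assumption. Its conclusion is $\Ralg{\rho}' = \Lalg{\lambda}$. Combining the two equalities yields $\Lalg{\lambda}'' = \Ralg{\rho}' = \Lalg{\lambda}$, as required. One could record in passing the symmetric statement: the same two equalities give $\Ralg{\rho}'' = \Lalg{\lambda}' = \Ralg{\rho}$, so $\Ralg{\rho}$ also satisfies the double commutant property under these hypotheses.

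I do not expect any genuine obstacle here; the statement is a formal consequence of the two theorems, and all of the analytic substance --- the commuting square condition relating $\lambda$ to~$\rho$, together with the Ces\`aro averaging argument built from Lemmas~\ref{lem:Sigma-k}, \ref{lem:Xf} and~\ref{lem:ker} --- has already been expended in proving Theorem~\ref{commutant_thm} and its mirror. The only point requiring any care is bookkeeping: confirming that the two input theorems really do apply to the same pair $(\lambda,\rho)$, which holds because in both theorems $\rho$ denotes the canonical right companion of~$\lambda$, and that both boundedness hypotheses (left-boundedness of~$\lambda$ for Theorem~\ref{commutant_thm}, right-boundedness of~$\rho$ for the mirror theorem) are supplied by the assumptions of the present statement.
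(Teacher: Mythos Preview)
Your proposal is correct and matches the paper's approach exactly: the paper simply states that this theorem follows by combining Theorem~\ref{commutant_thm} with its mirror, which is precisely the chain $\Lalg{\lambda}''=\Ralg{\rho}'=\Lalg{\lambda}$ you describe. No additional argument is given or needed.
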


  If~$\alpha\colon G^+\to (0,\infty)$ is any path weight with
  $\sup_v\alpha(v)<\infty$ and $\inf_v\alpha(v)>0$, then plainly
  $\lambda_\alpha$ is left-bounded and its canonical right companion
  $\rho_\alpha$ is right-bounded, giving a large class of weights
  satisfying the hypotheses of this result. In particular, if
  $|G^+|<\infty$ (i.e., if $G$ is a finite \emph{acyclic} directed graph), then
  for any left weight $\lambda$ on~$G$, we see that $\Lalg{\lambda}$ is an
  algebra of $n\times n$ matrices with $\Lalg{\lambda}''=\Lalg{\lambda}$,
  where $n=|G^+|$.

  On the other hand, there are many weights which satisfy the hypotheses
  of Theorem~\ref{preconj} but violate these boundedness
  conditions for the path weight $\alpha$. For example, if $G^+$ contains paths of arbitrary length
  and $\alpha(v)=f(|v|)$ where $f\colon \bN_0\to (0,\infty)$ is any
  decreasing function with~$f(0)=1$ and $f(k)\to 0$ as $k\to \infty$,
  then $\lambda_\alpha$ is left-bounded and $\rho_\alpha$ is
  right-bounded, but $\inf_v\alpha(v)=0$.
 
We now show one way to weaken the hypotheses in
Theorem~\ref{preconj}, at least if~$G$ is a finite directed
graph. We first require a technical lemma.

\begin{lem}\label{lem:phi}

  Let~$\rho$ be a right weight on~$G$ and let $u\in G^+$ with
  $\rho(u)<\infty$.  Let $k\ge0$ and let $X\in \B(\H_G)$. If
  $[X,R_{\rho,u}]=0$, then $[\Sigma_{k}(X),R_{\rho,u}]=0$.
\end{lem}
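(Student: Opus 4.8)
The plan is to reduce the claim to the structure already exposed for the maps $\Phi_j$, and to exploit the fact that $\Sigma_k$ is a finite linear combination of the $\Phi_j$. Concretely, since $\Sigma_k(X)=\sum_{|j|<k}(1-\tfrac{|j|}{k})\Phi_j(X)$, it suffices to prove the sharper statement that $[X,R_{\rho,u}]=0$ implies $[\Phi_j(X),R_{\rho,u}]=0$ for every $j\in\bZ$; the lemma then follows by taking the appropriate linear combination. So the real content is: each spectral slice $\Phi_j$ of an operator commuting with $R_{\rho,u}$ still commutes with $R_{\rho,u}$.

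To see the slice statement, recall from the proof of Lemma~\ref{lem:Sigma-k} that $\Phi_j(X)=\int_{|z|=1} z^{j} U_z X U_z^*\,dz$, where $U_z$ is the diagonal unitary with $U_z\xi_v=z^{|v|}\xi_v$. The key observation is how $U_z$ interacts with $R_{\rho,u}$: since $R_{\rho,u}\xi_v=\rho(v,u)\xi_{vu}$ and $|vu|=|v|+|u|$, we have $U_z R_{\rho,u} U_z^* = z^{|u|} R_{\rho,u}$, i.e.\ $R_{\rho,u} U_z^* = z^{-|u|} U_z^* R_{\rho,u}$ and $U_z R_{\rho,u} = z^{|u|} R_{\rho,u} U_z$. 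Now compute, using $[X,R_{\rho,u}]=0$ under the integral sign:
\[
\Phi_j(X) R_{\rho,u}
= \int_{|z|=1} z^{j} U_z X U_z^* R_{\rho,u}\,dz
= \int_{|z|=1} z^{j-|u|} U_z X R_{\rho,u} U_z^*\,dz
= \int_{|z|=1} z^{j-|u|} U_z R_{\rho,u} X U_z^*\,dz,
\]
and on the other side
\[
R_{\rho,u}\Phi_j(X)
= \int_{|z|=1} z^{j} R_{\rho,u} U_z X U_z^*\,dz
= \int_{|z|=1} z^{j-|u|} U_z R_{\rho,u} X U_z^*\,dz.
\]
The two integrands agree pointwise in $z$, so $\Phi_j(X)R_{\rho,u}=R_{\rho,u}\Phi_j(X)$, as required. (One should note $R_{\rho,u}$ is bounded since $\rho(u)<\infty$, so all these manipulations are legitimate; the integrals are Bochner integrals of bounded-operator-valued functions, or one can read them weakly against pairs of basis vectors exactly as in Lemma~\ref{lem:Sigma-k}.)

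The only point requiring a little care — and the closest thing to an obstacle — is justifying that the commutation relation $[X,R_{\rho,u}]=0$ can be pulled inside the integral, i.e.\ that $\int z^{j-|u|} U_z (X R_{\rho,u}) U_z^*\,dz = \int z^{j-|u|} U_z (R_{\rho,u} X) U_z^*\,dz$ when $XR_{\rho,u}=R_{\rho,u}X$ as operators. This is immediate: the two operator-valued integrands are literally equal as functions of $z$, since $XR_{\rho,u}$ and $R_{\rho,u}X$ are the same bounded operator. Alternatively, and perhaps more transparently for the reader, one can avoid integrals entirely: a direct basis computation shows $\langle R_{\rho,u}\Phi_j(X)\xi_v,\xi_w\rangle$ and $\langle \Phi_j(X)R_{\rho,u}\xi_v,\xi_w\rangle$ each reduce to a single matrix entry of $X$ (using that $R_{\rho,u}$ shifts length by $|u|$ and $\Phi_j$ retains only the length-$(-j)$-shifting part), and these entries coincide precisely because $[X,R_{\rho,u}]=0$; then pass to the Cesàro combination $\Sigma_k$. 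Either route gives the lemma; I would present the $U_z$-conjugation argument as the cleaner one.
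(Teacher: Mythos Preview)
Your argument is correct and reaches the same intermediate conclusion as the paper, namely that $[\Phi_j(X),R_{\rho,u}]=0$ for each $j$, whence the lemma follows by linearity. The route, however, is different. The paper works directly with the series definition $\Phi_j(X)=\sum_{m\ge\max\{0,-j\}}Q_mXQ_{m+j}$ and the elementary intertwining relation $Q_mR_{\rho,u}=R_{\rho,u}Q_{m-|u|}$ for $m\ge|u|$ (and $=0$ otherwise), which immediately gives $\Phi_j(X)R_{\rho,u}=R_{\rho,u}\Phi_j(X)$ by reindexing the sum. You instead invoke the integral representation $\Phi_j(X)=\int z^j U_zXU_z^*\,dz$ from Lemma~\ref{lem:Sigma-k} together with the homogeneity identity $U_zR_{\rho,u}U_z^*=z^{|u|}R_{\rho,u}$. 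Both arguments encode the same fact---that $R_{\rho,u}$ is homogeneous of degree~$|u|$ for the length grading---and are of comparable length; the paper's version is perhaps more self-contained (it needs only the projections $Q_m$), while yours recycles the machinery already set up in Lemma~\ref{lem:Sigma-k}.

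One small slip: the displayed relation ``$R_{\rho,u}U_z^*=z^{-|u|}U_z^*R_{\rho,u}$'' is incorrect as written (the exponent should be $+|u|$, or equivalently one has $U_z^*R_{\rho,u}=z^{-|u|}R_{\rho,u}U_z^*$). Your subsequent computation uses the correct form, so the error is cosmetic, but you should fix the statement of the intertwining relation before submitting.
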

\begin{proof}
  By calculating values on canonical basis vectors, we observe that
  \[ Q_mR_{\rho,u}=
  \begin{cases}
    R_{\rho,u}Q_{m-|w|}&\text{if $m\ge|u|$}\\
    0&\text{if $0\leq m<|u|$.}
  \end{cases}
  \]
  So if $[X,R_{\rho,u}]=0$, then
  \[ 
  \Phi_j(X)R_{\rho,u}=\sum_{m\geq
    \max\{|u|,|u|-j\}}R_{\rho,u}Q_{m-|u|}XQ_{m+j-|u|}=R_{\rho,u}\Phi_j(X).
  \]
  Since~$\Sigma_{k}(X)$ is a linear combination of 
  the operators $\Phi_j(X)$, which all commute with $R_{\rho,u}$, we see that $\Sigma_{k}(X)$ commutes with $R_{\rho,u}$.
\end{proof}

For any right weight~${\rho}$ on~$G$, let us write
\[G^+_{\rho}=\{u\in G^+\colon {\rho}(u)<\infty\}.\] Since $\rho(x)=1$ for $x\in V(G)$, we have $V(G)\subseteq G^+_\rho$. Moreover, since  $\rho(vw)\leq \rho(v)\rho(w)$ for any $vw\in G^+$, we see that $G^+_\rho$ is a subsemigroupoid in~$G^+$. 

\begin{thm}\label{thm:tails}
  Let~${\lambda}$ be a left-bounded left weight on a finite directed
  graph~$G$, with canonical right companion~$\rho$. If
  \begin{equation}\label{eq:tails}
    \forall\,v\in G^+\ \exists\,u_v\in  G^+_\rho\colon vu_v\in G^+_\rho,
  \end{equation}
  then $\Lalg{\lambda}''=\Lalg{\lambda}$.
\end{thm}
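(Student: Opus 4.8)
The plan is to show that $\Lalg{\lambda}'' \subseteq \Lalg{\lambda}$; the reverse inclusion is automatic. By Theorem~\ref{commutant_thm}, $\Lalg{\lambda}' = \Ralg{\rho}$, so we must prove that any $X \in \Ralg{\rho}'$ lies in $\Lalg{\lambda}$. Since $G$ is finite, $\H_G$ decomposes along path-length gradations and the maps $\Phi_j$, $\Sigma_k$ are available with good convergence properties. By Lemma~\ref{lem:phi}, $X \in \Ralg{\rho}'$ implies $\Sigma_k(X) \in \Ralg{\rho}'$ for every $k$, and by Lemma~\ref{lem:Sigma-k}, $\Sigma_k(X) \to X$ in SOT; so it suffices to show each $\Sigma_k(X)$ lies in $\Lalg{\lambda}$. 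In other words, we reduce to the case where $X$ is ``band-limited'': $X = \sum_{|j|<k} \Phi_j(X)$ with each $\Phi_j(X)$ mapping length-$m$ vectors into length-$(m-j)$ vectors (necessarily $m - j \ge 0$). Since $\Lalg{\lambda}$ is WOT-closed and contains the identity, and since on a finite graph $\Phi_j(X) = \sum_m Q_m X Q_{m+j}$ is a genuine (SOT-convergent, in fact locally finite in each matrix entry) sum, it further suffices to treat a single homogeneous piece $Y = \Phi_j(X)$, i.e.\ $Y$ lowers path length by exactly $j \ge 0$, and $Y$ still commutes with all $R_{\rho,u}$.

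For such a homogeneous $Y$, I would compute its matrix coefficients against the generators. Commuting with $R_{\rho,u}$ for $u \in G^+_\rho$ forces the coefficients $\langle Y\xi_v, \xi_w\rangle$ to be rigidly determined by their values ``at the top of each tree'': writing $b_w = \langle Y \xi_{?}, \xi_w \rangle$ appropriately, the relation $Y R_{\rho,u} = R_{\rho,u} Y$ applied to a vacuum vector $\xi_x$ and paired with basis vectors should give, for any $v$ with $s(v) = x$ suitably placed, an identity expressing $\langle Y\xi_v, \xi_w\rangle$ as a ratio of $\rho$-weights times a ``seed'' coefficient, exactly paralleling the computation in the proof of Theorem~\ref{commutant_thm} but with the roles of $L$ and $R$ (equivalently $\lambda$ and $\rho$) interchanged. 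This identifies $Y$, at the level of formal series, with $\sum_{|w| = j} c_w L_{\lambda,w}$ for suitable scalars $c_w$ — a \emph{finite} sum, since $G$ is finite and $|w| = j$ is bounded. The candidate operator $Z = \sum_{|w|=j} c_w L_{\lambda,w}$ lies in $\Lalg{\lambda}$ manifestly, and $K := Y - Z$ again commutes with every $R_{\rho,u}$, i.e.\ $K \in \Ralg{\rho}' = \Lalg{\lambda}'' \subseteq$ (a priori just $\B(\H_G)$); the aim is to show $K = 0$ by checking it kills enough vectors, via a Lemma~\ref{lem:ker}-type argument adapted to the right algebra.

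This last step is where hypothesis~\eqref{eq:tails} enters, and it is the main obstacle. The naive analogue of Lemma~\ref{lem:ker} would say: if $K$ commutes with all $R_{\rho,u}$ and $K\xi_x = 0$ for all vacuum vectors $x$, then $K = 0$ — but the proof of Lemma~\ref{lem:ker} used that \emph{every} $\xi_w$ is a scalar multiple of $L_{\lambda,w}\xi_{s(w)}$, which has no right-sided counterpart unless every path $w$ can be ``completed on the right'' to a path $wu$ with $\rho(u) < \infty$ along which $\xi_w$ is reachable: that is precisely condition~\eqref{eq:tails}. Concretely, given $v \in G^+$, pick $u_v \in G^+_\rho$ with $vu_v \in G^+_\rho$; then $R_{\rho,u_v}\xi_v$ is a nonzero multiple of $\xi_{vu_v}$ and $\xi_{vu_v}$ is also a nonzero multiple of $R_{\rho,vu_v}\xi_{r(v)}$ (both make sense because the relevant $\rho$-values are finite), so one can propagate a vanishing condition from vacuum vectors outward to all basis vectors through $K R_{\rho,u_v} = R_{\rho,u_v} K$. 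Thus, provided one first checks $K\xi_x = 0$ on vacuum vectors (which follows because $Z$ was built to match $Y$ there — this requires verifying that the seed coefficients $c_w$ really do reproduce $\langle Y\xi_x,\xi_w\rangle$, a bookkeeping point), the tails condition upgrades this to $K = 0$, whence $Y = Z \in \Lalg{\lambda}$. Reassembling over $j$ and over $k$ via the SOT limits above finishes the proof. The delicate points to get right are: (i) that the matrix-coefficient rigidity computation for the right algebra genuinely pins down $Y$ as a finite combination of the $L_{\lambda,w}$ with $|w| = j$ and no extra freedom; and (ii) that $G^+_\rho$ being a subsemigroupoid containing $V(G)$ is exactly what makes the tails-propagation argument well-defined at each stage.
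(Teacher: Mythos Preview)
Your outline is essentially the paper's argument: pass to $\Sigma_k$ using Lemma~\ref{lem:phi}, build a polynomial in the $L_{\lambda,w}$ matching the operator on vacuum vectors, and then invoke~\eqref{eq:tails} for a right-sided analogue of Lemma~\ref{lem:ker}. Two small points. First, the extra slicing into individual $\Phi_j(X)$ is harmless but unnecessary: the paper compares $\Sigma_k(T)$ directly with the single Ces\`aro-weighted polynomial $p_k(T)=\sum_{|w|<k}(1-|w|/k)\,a_w\,\lambda(s(w),w)^{-1}L_{\lambda,w}$, which also sidesteps your sign slip (the length-raising pieces correspond to $j\le 0$, not $j\ge 0$). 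Second, in the propagation step you only obtain $R_{\rho,u_v}K\xi_v=0$, and $R_{\rho,u_v}$ has large kernel; you must first record that $K\xi_v\in R_{s(v)}\H_G$ (because $R_{\rho,s(v)}$ is a projection in $\Ralg{\rho}$ commuting with $K$), on which subspace $R_{\rho,u_v}$ is injective --- the paper phrases this as the injectivity of the diagonal operator $R_{\rho,u_v}^*R_{\rho,u_v}$ restricted to $R_{r(u_v)}\H_G$.
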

\begin{proof}
  It suffices to show that $\Lalg{\lambda}''\subseteq \Lalg{\lambda}$; equivalently (by Theorem~\ref{commutant_thm}) that $\Ralg{\rho}'\subseteq \Lalg{\lambda}$.
  Suppose that~$T\in\Ralg{\rho}'$. 
  If~$v\in G^+$, then $R_{\rho,s(v)}=R_{s(v)}$ is a projection
  in~$\Ralg{\rho}$ with $\xi_v=R_{s(v)}\xi_v$ and $[T,R_{s(v)}]=0$, from
  which it follows that $T\xi_v\in R_{s(v)}\H_G$. Moreover, if $u\in
  G^+_\rho$, then the restriction of $R_{\rho,u}^*R_{\rho,u}$ to
  $R_{r(u)}\H_G$ is an injective diagonal operator since it maps
  $\xi_v$ to $\rho(v,u)^2\xi_v$ if $s(v)=r(u)$. 

  Now suppose $K\in \Ralg{\rho}'$ and $K\xi_x=0$ for all $x\in V(G)$; we
  claim that we necessarily have $K=0$. To see this, let~$v\in G^+$,
  let~$u_v$ be as in Eq.~(\ref{eq:tails}) and note that $r(u_v)=s(v)$,
  since $vu_v\in G^+$. Now
  \begin{align*}
    R_{\rho,u_v}^*R_{\rho,u_v}K\xi_v&=    R_{\rho,u_v}^*KR_{\rho,u_v}\xi_v =\rho(v,u_v)R^*_{\rho,u_v}K\xi_{vu_v} \\&= \rho(v,u_v)\rho(r(v),vu_v)^{-1}R_{\rho,u_v}^*R_{\rho,vu_v}K\xi_{r(v)}=0,
  \end{align*}
  so $K\xi_v=0$ by the observations of the previous paragraph,
  establishing the claim.

  Now let $T\in \Ralg{\rho}'$ be arbitrary. Since~$G$ is finite, for each $k\in \bN$ the set $\{w\in G^+\colon |w|<k\}$ is finite and we may consider the operator
  \[ p_k(T)=\sum_{\{w\in G^+\colon |w|<k\}} \left(1-\frac{|w|}k\right) a_w
  \lambda(s(w),w)^{-1}L_{\lambda,w}\] where $a_w=\langle
  T\xi_{s(w)},\xi_w\rangle$ for $w\in G^+$. Clearly, $p_k(T)\in
  \Lalg{\lambda}$. As observed above, we have $T\xi_x\in R_x\H_G$,
  so \[T\xi_x=\sum_{w\in s^{-1}(x)} a_w\xi_w.\]
  By Lemma~\ref{lem:phi}, the operators $\Sigma_k(T)$
  are in $\Ralg{\rho}'$. Hence $K=\Sigma_k(T)-p_k(T)\in \Ralg{\rho}'$, and a
  calculation gives $K\xi_x=0$ for all $x\in V(G)$. Hence
  $\Sigma_k(T)=p_k(T)\in \Lalg{\lambda}$. Since~$\Lalg{\lambda}$ is strongly closed and
  $\Sigma_k(T)\to T$ strongly, we obtain $T\in \Lalg{\lambda}$. Hence
  $\Ralg{\rho}'\subseteq\Lalg{\lambda}$ which completes the proof.
\end{proof}

We now give some examples illustrating this result.  
If~$u\in G^+$, it will be useful to write
\[ G^+u=\{vu\colon v\in G^+ \text{ and } vu\in G^+\}.\]

\begin{figure}
\centering
\raisebox{-.5\height}{\includegraphics{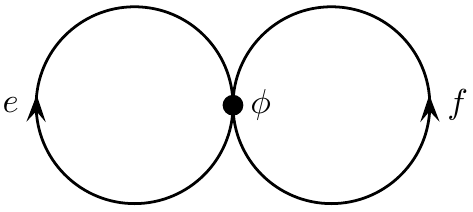}}\quad
\raisebox{-.5\height}{\includegraphics{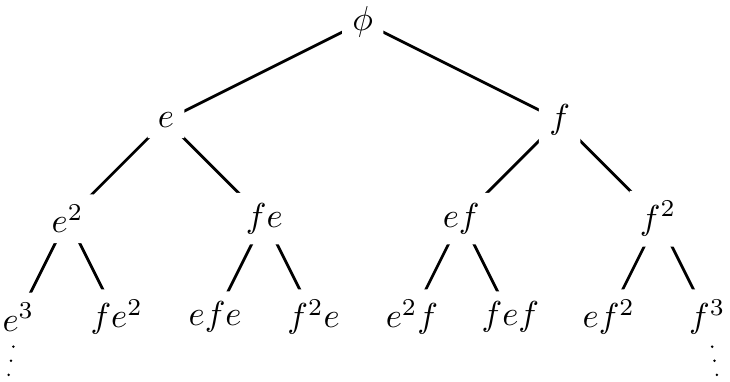}}%
\vspace*{-\bigskipamount}
\caption{$G$ and~$G^+$ in
  Examples~\ref{exa:rho(v)1} and~\ref{exa:rho(v)}}
\label{f:eg45}
\vspace*{2.5\bigskipamount}

\includegraphics{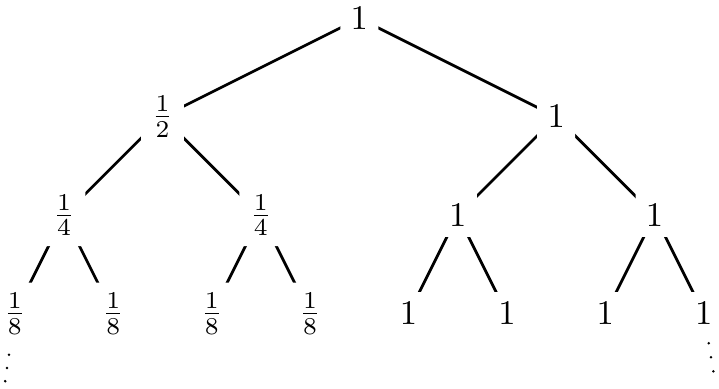}
\vspace*{-\bigskipamount}
\caption{The path weight~$\alpha$ considered in Example~\ref{exa:rho(v)1}}
\label{f:eg45a}
\vspace*{2.5\bigskipamount}

\raisebox{-.5\height}{\includegraphics{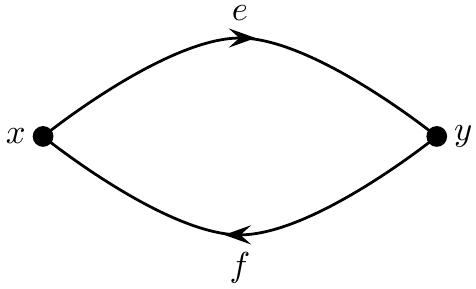}}\qquad\qquad\qquad
\raisebox{-.5\height}{\includegraphics{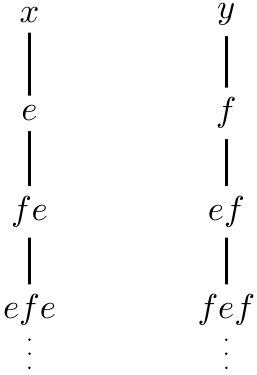}}
\vspace*{-\bigskipamount}
\caption{$G$ and~$G^+$ in
  Example~\ref{exa:twovertex}}
\label{f:eg46}
\vspace*{2.5\bigskipamount}

\includegraphics{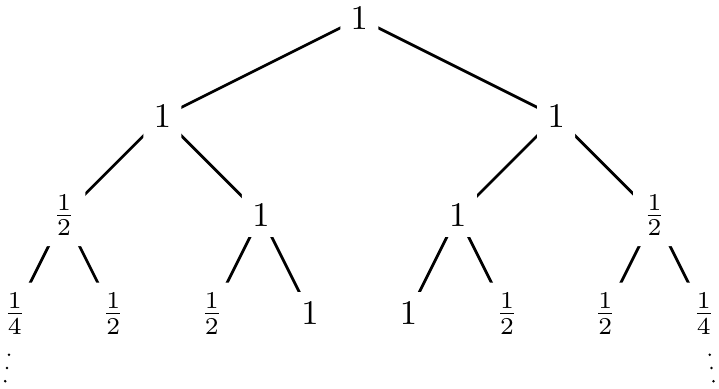}
\vspace*{-\bigskipamount}
\caption{The path weight~$\alpha$ considered in Example~\ref{exa:rho(v)}}
\label{f:eg47a}
\end{figure}

\begin{exa}\label{exa:rho(v)1}
  Let~$G$ be the directed graph with a single vertex~$\phi$ and two
  loop edges, $e$ and~$f$, so that
  $G^+=\{\phi,e,f,ee=e^2,ef,fe,f^2,eee=e^3,\dots\}$ and
  $s(w)=r(w)=\phi$ for every~$w\in G^+$ (see Figure~\ref{f:eg45}).  As
  indicated in Figure~\ref{f:eg45a}, we consider the path weight
  $\alpha\colon G^+\to (0,\infty)$ given by
  \[\alpha(v)=
  \begin{cases}
    2^{-|v|}&\text{if $v\in G^+e$}\\
    1&\text{otherwise},
  \end{cases}
  \]
  and let $\lambda=\lambda_\alpha$.  It is easy to check that the left
  weight~$\lambda$ is  left-bounded (in
  fact $\lambda(w)\leq 1$ for all $w\in G^+$); the canonical
  right companion of~$\lambda$ is~$\rho=\rho_\alpha$ by 
  Proposition~\ref{prop:canonical}. 

  We claim that while $\rho$ is not right-bounded, we
  have
  \[ G^+_{\rho}=\{\phi\}\cup G^+e = G^+\setminus G^+f,\] so that
  Eq.~(\ref{eq:tails}) holds with $u_v=e$ for all $v\in G^+$, hence
  $\Lalg{\lambda}''=\Lalg{\lambda}$.
  
  Let us check that $G^+_{\rho}$ is indeed of this form. We have
  ${\rho}(v,u)=\frac{\alpha(vu)}{\alpha(v)}$ for any $ v,u\in G^+$.  In
  particular, if $u\in G^+f$, then $\alpha(vu)=1$ and so $\rho(v,u)=2^{|v|}$ for $v\in G^+e$,
  so ${\rho}(u)=\infty$. On the other hand, if $u\in
  G^+\setminus G^+f$, then $\alpha(vu)=2^{-|vu|}\leq \alpha(v)$ for all~$v\in G^+$, so $\rho(u)\leq 1$.
\end{exa}

\begin{exa}
  \label{exa:twovertex}
  Let~$G$ be the directed $2$-cycle, so that $V(G)=\{x,y\}$
  and~$E(G)=\{e,f\}$ where $s(e)=r(f)=x$ and $s(f)=r(e)=y$ (see Figure~\ref{f:eg46}). For this
  particular graph~$G$, we will show that if~$\lambda$ is any
  left-bounded left weight on~$G$ with right companion~$\rho$, then
  $\Lalg{\lambda}''=\Lalg{\lambda}$ if and only if $G_\rho^+$ satisfies
  Eq.~(\ref{eq:tails}).

  Note that the edges in any path in~$G^+$ must
  alternate: \[G^+=\{x,y,e,f,ef,fe,efe,fef,efef,fefe,\dots\}.\] We
  first show that $ef\in G^+_\rho$. Let $v\in G^+\setminus V(G)$ with
  $vef\in G^+$.  Either $v=(ef)^{k}$ or $v=f(ef)^{k-1}=(fe)^{k-1}f$
  for some $k\ge1$. Let~$\alpha\colon G^+\to (0,\infty)$ be the path
  weight with $\lambda=\lambda_\alpha$ and $\rho=\rho_\alpha$. Since $(ef)^kef=(ef)^{k+1}=ef(ef)^k$, we have \[\rho((ef)^k,ef)=\frac{\alpha((ef)^{k+1})}{\alpha((ef)^k)} = \lambda((ef)^k,ef)\leq \lambda(ef)<\infty\]
  and since $f(ef)^{k-1}ef=f(ef)^k=(fe)^kf$, we have
  \[\rho(f(ef)^{k-1},ef)=\frac{\alpha((fe)^{k}f)}{\alpha((fe)^{k-1}f)}=\lambda((fe)^{k-1}f,fe)\leq \lambda(fe)<\infty,\]
  so $\rho(ef)<\infty$, i.e., $ef\in G^+_\rho$. Similarly, $fe\in
  G^+_\rho$. Since $G_\rho^+$ is a semigroupoid, we have $\langle ef,fe\rangle \subseteq G^+_\rho$ where $\langle
  ef,fe\rangle := V(G)\cup \{ (ef)^k,(fe)^k\colon k\ge1\}$.
  If~$G_\rho^+\supsetneq\langle ef,fe\rangle$, then
  $G_\rho^+$ contains an element of odd length. By symmetry, we may
  assume this is of the form $e(fe)^n$ for some $n\ge0$. We then also
  have $e(fe)^m=e(fe)^n(fe)^{m-n}\in G_\rho^+$ for any~$m> n$, so if
  we define $u_v$ for $v\in G^+$ by
  \[ u_v=
  \begin{cases}
    s(v)&\text{if $v\in \langle ef,fe\rangle$}\\
    (fe)^n&\text{if $v=e(fe)^k$ for some $k\ge0$}\\
    e(fe)^n&\text{if $v=f(ef)^k$ for some $k\ge0$,}
  \end{cases}
  \]
  then $u_v\in G^+_\rho$ and $vu_v\in G^+_\rho$ for all $v\in G^+$, so
  Eq.~(\ref{eq:tails}) holds and so $\Lalg{\lambda}''=\Lalg{\lambda}$ by
  Theorem~\ref{thm:tails}.
  
  On the other hand, if $G^+_\rho=\langle ef,fe\rangle$, then
  Eq.~(\ref{eq:tails}) does not hold, since if $|v|$ is odd and $u\in
  G^+_\rho$ with $vu\in G^+$, then $|vu|$ is also odd, so $vu\not\in
  G_\rho^+$. In this case, it is not difficult to see that the
  orthogonal projection $P$ onto the closed linear span of
  $\{\xi_v\colon \text{$v=x$ or $v=(fe)^k$, $k\ge1$}\}$ commutes with
  $R_{\rho,u}$ for $u\in \{x,y,ef,fe\}$, hence $P\in
  \Ralg{\rho}'=\Lalg{\lambda}''$. If~$T\in \Lalg{\lambda}$, then $\langle
  T\xi_x,\xi_x\rangle = \langle T\xi_f,\xi_f\rangle$. Since
  $P\xi_x=\xi_x$ and $P\xi_f=0$, we have $P\not\in \Lalg{\lambda}$. So
  $\Lalg{\lambda}''\ne \Lalg{\lambda}$.

  We note that it is indeed possible for Eq.~(\ref{eq:tails}) to fail
  for this graph~$G$. For example, let $\alpha\colon G^+\to
  (0,\infty)$ with $\alpha(v)=1$ for all $v\in s^{-1}(x)$, and
  $\alpha(v)=2^{f(|v|)}$ for $v\in s^{-1}(y)$ where $f\colon \bN_0\to
  \bZ$ is a function with $f(0)=0$, and $f(n+1)\in \{f(n)-1,f(n)+1\}$
  for all $n\in \bN_0$ and with $\sup_n f(n)=\infty$ and $\inf_n
  f(n)=-\infty$. One may then check that the left weight
  $\lambda_\alpha$ is left-bounded, and that its canonical right
  companion~$\rho$ satisfies $G_\rho^+=\langle ef,fe\rangle$, so
  Eq.~(\ref{eq:tails}) fails.
\end{exa}

\begin{exa}\label{exa:rho(v)}
  For a general left-bounded weight~$\lambda$, the double commutant
  property for $\Lalg{\lambda}$ can fail very badly. For example, let~$G$
  again be the directed graph with a single vertex~$\phi$ and two loop
  edges $e$ and~$f$, and let us now define a path weight $\alpha\colon
  G^+\to (0,\infty)$ recursively by setting
  $\alpha(\phi)=\alpha(e)=\alpha(f)=1$, and
  \begin{align*}
    \alpha(ewe)&=\tfrac12 \alpha(we),&\alpha(fwf)&=\tfrac 12 \alpha(wf),\\
    \alpha(ewf)&=\alpha(wf), &\alpha(fwe)&=\alpha(we).
  \end{align*}
  This is illustrated in Figures~\ref{f:eg45} and~\ref{f:eg47a}.
  Take $\lambda=\lambda_\alpha$ and $\rho=\rho_\alpha$. Observe
  that~${\lambda}$ is a left-bounded left weight
  since~$\alpha(wv)\leq \alpha(v)$ for all $w,v\in G^+$. For any~$k\in
  \bN$ and $w\in G^+$,
  \[ {\rho}(we)\geq
  {\rho}(f^k,we)=\frac{\alpha(f^kwe)}{\alpha(f^k)}=2^{k-1}
  \alpha(we)\to \infty\text{ as~$k\to \infty$,}\] hence ${\rho}(we)=\infty$;
  by symmetry, ${\rho}(wf)=\infty$. Hence $
  G^+_{\rho}=\{\phi\}$. Since $R_{\rho,\phi}=R_\phi=I$, we have $\Ralg{\rho}=\bC I$
  and so $\Lalg{\lambda}''=\Ralg{\rho}'=\B(\H_G)\ne \Lalg{\lambda}$.
\end{exa}

The commutant result yields other structural results on the algebras,
such as the following. %

\begin{cor}
  Let~${\lambda}$ be a left-bounded left weight on~$G$ with canonical right companion~$\rho$.
  If either $\rho$ is right-bounded, or~$G$ is finite and $G^+_{\rho}$ satisfies Eq.~(\ref{eq:tails}), then $\Lalg{\lambda}$ is inverse closed.
\end{cor}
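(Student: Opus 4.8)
The plan is to reduce the statement to the double commutant property, which holds under either hypothesis, and then to invoke the elementary fact that any operator algebra equal to its own double commutant is automatically inverse closed in $\B(\H_G)$.

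First I would record that under either of the stated hypotheses we have $\Lalg{\lambda}''=\Lalg{\lambda}$: this is Theorem~\ref{preconj} in the case that~$\rho$ is right-bounded, and Theorem~\ref{thm:tails} in the case that~$G$ is finite and $G^+_\rho$ satisfies Eq.~(\ref{eq:tails}). Equivalently, by Theorem~\ref{commutant_thm} we may write $\Lalg{\lambda}=\Ralg{\rho}'$, exhibiting $\Lalg{\lambda}$ as the commutant of a subset of $\B(\H_G)$.

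Next, suppose $T\in\Lalg{\lambda}$ is invertible in $\B(\H_G)$; the goal is to show $T^{-1}\in\Lalg{\lambda}$. Fix an arbitrary $S\in\Lalg{\lambda}'$. Since $T\in\Lalg{\lambda}=\Lalg{\lambda}''$, we have $TS=ST$, and multiplying this identity on the left and on the right by $T^{-1}$ gives $ST^{-1}=T^{-1}S$. As $S\in\Lalg{\lambda}'$ was arbitrary, $T^{-1}$ commutes with every element of $\Lalg{\lambda}'$, so $T^{-1}\in\Lalg{\lambda}''=\Lalg{\lambda}$, as required.

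There is essentially no obstacle in this argument, since all the substantive work is contained in the double commutant theorems already proved; the only remaining ingredient is the purely algebraic, one-line observation that the commutant of any subset of $\B(\H_G)$ is closed under the formation of inverses of its invertible elements. The same reasoning shows simultaneously that $\Ralg{\rho}$ is inverse closed whenever~$\rho$ is right-bounded.
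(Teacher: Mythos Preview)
Your proof is correct and matches the paper's approach: both arguments use the double commutant theorems (Theorems~\ref{preconj} and~\ref{thm:tails}) to identify $\Lalg{\lambda}$ with a commutant, and then apply the elementary fact that commutants are closed under inverses. The paper phrases this via $\Lalg{\lambda}=\Ralg{\rho}'$ directly, while you state it first as $\Lalg{\lambda}=\Lalg{\lambda}''$, but this is the same argument.
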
 
\begin{proof}
  This is a well-known property of commutants: if $A\in\Lalg{\lambda}
  = \Ralg{\rho}'$ is invertible in $\B(\H_G)$, then for all
  $R\in\Ralg{\rho}$, $A^{-1}R = A^{-1} RAA^{-1} = RA^{-1}$, and hence
$A^{-1}\in\Ralg{\rho}' = \Lalg{\lambda}$. 
\end{proof}

\begin{cor}
  If~${\lambda}$ is a left-bounded left weight on~$G$ whose canonical
  right companion~$\rho$ is right-bounded, then every normal element
  of $\Lalg{\lambda}$ lies in the SOT-closure of the linear
  span of the projections $L_{\lambda,x}$ for $x\in V(G)$.
  \end{cor}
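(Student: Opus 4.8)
The plan is to show that every normal $T\in\Lalg{\lambda}$ is diagonal with respect to the standard basis $\{\xi_v:v\in G^+\}$, with the diagonal entry at~$\xi_v$ depending only on the range vertex~$r(v)$. This suffices, since each $L_{\lambda,x}$ ($x\in V(G)$) is the orthogonal projection onto the closed linear span of $\{\xi_v:r(v)=x\}$, these projections are pairwise orthogonal with $\sum_{x\in V(G)}L_{\lambda,x}=I$ in the strong operator topology, and so the strong closure of $\spn\{L_{\lambda,x}:x\in V(G)\}$ consists precisely of the operators acting as a bounded scalar on each of these subspaces.

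First I record the analyticity of elements of $\Lalg{\lambda}$. For $u\in G^+$ and $T\in\B(\H_G)$ write $a_u=\langle T\xi_{s(u)},\xi_u\rangle$. Since $\lambda$ is left-bounded, $\Lalg{\lambda}$ is the WOT-closure of $\spn\{L_{\lambda,u}:u\in G^+\}$, because any product of generators $L_{\lambda,w}$ with $w\in V(G)\cup E(G)$ is either $0$ or $L_{\lambda,u}$ for a single path~$u$ by the cocycle condition, while $I=\sum_{x\in V(G)}L_{\lambda,x}$ lies in this WOT-closure. A direct computation on each $L_{\lambda,u}$, combined with the observation that the conditions below are linear in~$T$ and closed in the weak operator topology, shows that for every $T\in\Lalg{\lambda}$ and $v,w\in G^+$,
\[ \langle T\xi_v,\xi_w\rangle=0 \ \text{ if } w\notin G^+v, \qquad \langle T\xi_v,\xi_w\rangle=\frac{\lambda(v,u)}{\lambda(s(u),u)}\,a_u \ \text{ if } w=uv, \]
where $u$ is the unique path with $w=uv$. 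This upper-triangular structure is implicit in the identity $\Sigma_k(T)=p_k(T)$ appearing in the proof of Theorem~\ref{thm:tails}.

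Now let $T\in\Lalg{\lambda}$ be normal and fix $x\in V(G)$. Taking $v=x$ in the coefficient formula (for a vertex~$x$, $w=ux\in G^+$ forces $s(u)=x$, whence $ux=u$) gives $T\xi_x=\sum_{\{w\colon s(w)=x\}}a_w\xi_w$. On the other hand $\langle T\xi_v,\xi_x\rangle$ is nonzero only if $x=uv$ for some $u\in G^+$, which (as $x$ is a vertex) forces $u=v=x$, so $T^*\xi_x=\overline{a_x}\,\xi_x$. Comparing $\|T\xi_x\|^2=\sum_{\{w\colon s(w)=x\}}|a_w|^2$ with $\|T^*\xi_x\|^2=|a_x|^2$ and using normality of~$T$, we conclude that $a_w=0$ whenever $s(w)=x$ and $|w|\geq1$. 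Since every path of positive length has its source in $V(G)$, this forces $a_u=0$ for all $u\in G^+$ with $|u|\geq1$. Substituting back, $\langle T\xi_v,\xi_w\rangle$ vanishes unless $w=uv$ with $|u|=0$, i.e.\ $u=r(v)$ and $w=v$, in which case it equals $\lambda(v,r(v))\,\lambda(r(v),r(v))^{-1}a_{r(v)}=a_{r(v)}$. Hence $T\xi_v=a_{r(v)}\xi_v$ for all $v\in G^+$. Since $|a_x|=|\langle T\xi_x,\xi_x\rangle|\leq\|T\|$, the sum $D=\sum_{x\in V(G)}a_x L_{\lambda,x}$ converges in the strong operator topology to an element of the strong closure of $\spn\{L_{\lambda,x}:x\in V(G)\}$, and $D\xi_v=a_{r(v)}\xi_v=T\xi_v$, so $T=D$ lies in that closure, as claimed.

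The step requiring the most care is the coefficient formula of the second paragraph, i.e.\ the persistence of analyticity under the WOT-closure; the remainder of the argument is elementary. One could instead use the right-boundedness of~$\rho$: by Fuglede's theorem together with Theorem~\ref{commutant_thm} and its mirror image (the commutant of $\Ralg{\rho}$ is $\Lalg{\lambda}$), one obtains $T^*\in\Lalg{\lambda}$ as well, reducing to the case of self-adjoint~$T$; but the route above is more direct.
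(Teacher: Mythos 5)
Your proof is correct, but it takes a genuinely different route from the paper's. The paper's argument leans on the commutant machinery: using Theorem~\ref{commutant_thm} together with the right-boundedness of~$\rho$, it writes $\xi_u=\rho(r(u),u)^{-1}R_{\rho,u}\xi_{r(u)}$ for every $u\in G^+$, deduces that each vacuum vector $\xi_x$ is an eigenvector for $A^*$ whenever $A\in\Lalg{\lambda}$, applies normality to get $N\xi_x=a_x\xi_x$, and then propagates this to every basis vector by commuting $N$ past the operators $R_{\rho,u}$. You instead prove a triangular ``Fourier coefficient'' lemma for $\Lalg{\lambda}$ itself: every $T\in\Lalg{\lambda}$ has matrix entries supported on pairs $(\xi_v,\xi_{uv})$ with $\langle T\xi_v,\xi_{uv}\rangle=\lambda(v,u)\lambda(s(u),u)^{-1}\langle T\xi_{s(u)},\xi_u\rangle$, a family of linear, WOT-closed conditions verified on the generators $L_{\lambda,u}$ (and on~$I$), and then you invoke normality only at the vacuum vectors. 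Both proofs hinge on the same normality trick ($\|N\xi_x\|=\|N^*\xi_x\|$ with $N^*\xi_x\in\bC\xi_x$), but your route never uses $\Ralg{\rho}$ or the hypothesis that $\rho$ is right-bounded, so it in fact establishes the conclusion under the weaker assumption that $\lambda$ is left-bounded alone; what the paper's approach buys is brevity once Theorem~\ref{commutant_thm} is in hand, and an illustration of how the commutant controls $\Lalg{\lambda}$. Two minor points: the precise description of the SOT-closure of $\spn\{L_{\lambda,x}\colon x\in V(G)\}$ in your first paragraph is more than you need, since $T=\sum_{x}a_xL_{\lambda,x}$ is exhibited directly as an SOT-limit of its finite partial sums; and the alternative Fuglede argument you sketch at the end does require right-boundedness of~$\rho$, since it relies on the mirror-image commutant theorem, so it would not yield the strengthening your main argument gives.
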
 

\begin{proof}
  Let $N$ be a normal element of $\Lalg{\lambda}$. Set $a_x = \langle N\xi_x,\xi_x\rangle$ for $x\in V(G)$ and let~$M$ be the SOT-convergent sum $M=\sum_{x\in V(G)}a_xL_{\lambda,x}$. Observe that each $\xi_x$ is an eigenvector for $\Lalg{\lambda}^*$, as for all $u\in G^+\setminus\{x\}$ and $A\in \Lalg{\lambda}$, we have \[
  \langle A^*\xi_x, \xi_u\rangle =\tfrac1{{\rho}(r(u),u)} \langle\xi_x, A R_{\rho,u} \xi_{r(u)}\rangle=
 \tfrac1{{\rho}(r(u),u)}\langle R_{\rho,u}^*\xi_x,A\xi_{r(u)}\rangle=0.
\]
Thus $N^* \xi_x = \overline{a_x} \xi_x$, and by normality $N\xi_x = a_x \xi_x$. Hence for all $u\in G^+$, \[N\xi_u = \tfrac1{{\rho}(r(u),u)}N R_{\rho,u} \xi_{r(u)} = \tfrac1{{\rho}(r(u),u)}R_{\rho,u} N\xi_{r(u)} = a_{r(u)} \xi_u=M\xi_u,\] so $N=M$. 
\end{proof}

\begin{rem}
  Building on Theorems~\ref{preconj} and~\ref{thm:tails}, a natural open problem is to determine weighted graph conditions  that fully characterize when 
the algebra~$\Lalg{\lambda}$ and its double commutant~$\Lalg{\lambda}''$ coincide.
\end{rem}

\begin{ack} This work was partly
  supported by a London Mathematical Society travel grant. The first named author was
  partly supported by NSERC Discovery Grant 400160 and a University Research Chair at Guelph. We are grateful to the referee for helpful suggestions. 
\end{ack}

\bibliographystyle{plain}
\bibliography{myrefs}
\end{document}